\theoremstyle{definition}
\newcounter{maincoro}
\newtheorem{theorem}{Theorem}[section]
\newtheorem{lemma}[theorem]{Lemma}
\newtheorem{corollary}[theorem]{Corollary}
\theoremstyle{definition}
\newcounter{maintheorem}
\newtheorem{definition}[theorem]{Definition}
\theoremstyle{remark}
\numberwithin{equation}{section}
\newcommand{\R}{\mathbb{R}}
\newcommand{\N}{\mathbb{N}}
\renewcommand{\tocsection}[3]{%
	\indentlabel{\@ifnotempty{#2}{\bfseries\ignorespaces#1 #2\quad}}\bfseries#3}
\renewcommand{\tocsubsection}[3]{%
	\indentlabel{\@ifnotempty{#2}{\ignorespaces#1 #2\quad}}#3}
\newcommand\@dotsep{4.5}
\def\@tocline#1#2#3#4#5#6#7{\relax
	\ifnum #1>\c@tocdepth 
	\else
	\par \addpenalty\@secpenalty\addvspace{#2}%
	\begingroup \hyphenpenalty\@M
	\@ifempty{#4}{%
		\@tempdima\csname r@tocindent\number#1\endcsname\relax
	}{%
		\@tempdima#4\relax
	}%
	\parindent\z@ \leftskip#3\relax \advance\leftskip\@tempdima\relax
	\rightskip\@pnumwidth plus1em \parfillskip-\@pnumwidth
	#5\leavevmode\hskip-\@tempdima{#6}\nobreak
	\leaders\hbox{$\m@th\mkern \@dotsep mu\hbox{.}\mkern \@dotsep mu$}\hfill
	\nobreak
	\hbox to\@pnumwidth{\@tocpagenum{\ifnum#1=1\bfseries\fi#7}}\par
	\nobreak
	\endgroup
	\fi}
\renewcommand\csname r@tocindent0\endcsname{0pt}
\def\l@subsection{\@tocline{2}{0pt}{2.5pc}{5pc}{}}
\newcommand{\nn}[1]{{\left\vert\kern-0.25ex\left\vert\kern-0.25ex\left\vert #1 
		\right\vert\kern-0.25ex\right\vert\kern-0.25ex\right\vert}}
\newcommand{\ep}{\varepsilon}
\newcommand\restr[2]{\ensuremath{\left.#1\right|_{#2}}}
\thanks{}
\subjclass[2020]{}
\date{\today}
\keywords{}
\begin{document}

\title[The equivalence between CPCP and SR under KMP]{The equivalence between CPCP and strong regularity under Krein-Milman property}

\author[G. L\'opez-P\'erez]{Gin\'es L\'opez-P\'erez}
\address[G. L\'opez-P\'erez]{Universidad de Granada, Facultad de Ciencias. Departamento de An\'alisis Matem\'atico, 18071-Granada   and Instituto de Matem\'aticas de la Universidad de Granada (IMAG), (Spain) \newline
\href{https://orcid.org/0000-0002-3689-1365}{ORCID: \texttt{0000-0002-3689-1365}}}
\email{glopezp@ugr.es}

\author[R. Medina]{Rub\'en Medina}
\address[R. Medina]{Universidad de Granada, Facultad de Ciencias. Departamento de Análisis Matemático, 18071-Granada (Spain); and Czech Technical University in Prague, Faculty of Electrical Engineering. Department of Mathematics, Technická 2, 166 27 Praha 6 (Czech Republic) \newline
\href{https://orcid.org/0000-0002-4925-0057}{ORCID: \texttt{0000-0002-4925-0057}}}
\email{rubenmedina@ugr.es}

\thanks{This research work has been supported by PID2021-122126NB-C31 (MCIU, AEI, FEDER, UE),  by Junta de Andalucía Grant P20-00255 (FEDER, UE) and by Junta de Andalucía Grant FQM-0185. The first author research has also been supported by MICINN (Spain) Grant CEX2020-001105-M (MCIU, AEI). The second author research has been supported by MIU (Spain) FPU19/04085 Grant and by Czech Technical University in Prague projects GA23-04776S  and SGS22/053/OHK3/1T/13 (Czech Republic).}

\date{\today}
\keywords{Convex point of continuity property, Krein-Milman property, Radon-Nikodym property, topologies on Banach spaces}
\subjclass[2020]{ 46B20 (primary), and 46B22 (secondary)}

\begin{abstract} We obtain a result in the spirit of the well-known W. Schachermeyer and H. P. Rosenthal research about the equivalence between Radon-Nikodym and Krein-Milman properties, by showing that, for closed, bounded and convex subsets C of a separable Banach space, under Krein-Milman property for $C$,  one has the equivalence between convex point of continuity property and strong regularity both defined for every locally convex topology on C, containing the weak topology on C. Then, under Krein-Milman property, not only the classical convex point of continuity property and strong regularity are equivalent, but also when they are defined for an arbitrary locally convex topology containing the weak topology. We also show that while the unit ball $B$ of $c_0$ fails convex point of continuity property and strong regularity (both defined for the weak topology), threre is a locally convex topology $\tau$ on $B$, containing the weak topology on $B$, such that $B$ still fails convex point of continuity property for $\tau$, but $B$ surprisingly enjoys strong regularity for $\tau$-open sets. Moreover, $B$ satisfies the diameter two property for the topology $\tau$, that is, every nonempty $\tau$-open subset of $B$ has diameter two even though every $\tau$-open subset of $B$ contains convex combinations of relative $\tau$-open subsets with arbitrarily small diameter, that is, $B$ fails the strong diameter two property for the topology $\tau$. This stresses the known extreme differences up to now between those diameter two properties from a topological point of view.
\end{abstract}
\maketitle

\section{Introduction}
There are three families of subsets in each bounded, convex, and non-empty subset $C$ of a Banach space $X$ that are highly relevant to understand the weak topology $w$ of the space $X$ and its structural properties: the family of slices of $C$ (a subbasis of the weak topology), the family of relatively weakly open subsets of $C$ (or a basis of the weak topology) and the family of convex combinations of slices or relatively weakly open subsets of $C$ (a $\pi$-basis of the weak topology).
From the isomorphic point of view, these three families of subsets give rise to three widely studied isomorphic properties in Banach spaces, the Radon-Nikodym property (RNP) for $C$ (existence of slices of arbitrarily small diameter in each bounded and convex subset of C), the convex point of continuity property (CPCP) for $C$ (existence of relatively weakly open subsets of arbitrarily small diameter in each bounded and convex subset of $C$) and strong regularity (SR) for $C$ (existence of convex combinations of slices  or relatively weakly open subsets with arbitrarily small diameter in each bounded and convex subset of $C$). When $C$ is the unit ball $B_X$ of a Banach space $X$, it is said that X verifies RNP, CPCP, or is strongly regular, respectively (see \cite{B}, \cite{GM} for background). Another widely studied property related to the previous ones is the Krein-Milman property (KMP). It is said that the subset $C$ verifies the KMP if each nonempty, closed, bounded and convex subset of $C$ has some extreme point, and when $C$ is the unit ball of a space it is said that the space itself verifies the KMP. It is known that RNP implies CPCP and CPCP implies SR, and that these three properties are different \cite{AOR}. It is also known that RNP implies KMP, and it is a famous problem still open today whether KMP implies RNP. Research on this problem has obtained some important partial answers (see for example \cite{B}, \cite{BT}, \cite{C}, \cite{J}, \cite{LM}, \cite{PM}, \cite{S1}), although in general the most relevant result in this regard has been to be able to relate the properties defined above, obtaining that for a closed, bounded, convex and strongly regular subset C of a Banach space $X$, C verifies the RNP as long as C verifies the KMP (\cite{S2}, \cite{R}).
Consequently, the problem of the equivalence between the RNP and the KMP is solved for subsets where the RNP and the SR are not equivalent, or if desired, this problem is solved for subsets in which the CPCP and the SR are not equivalent, since RNP implies CPCP. In short, the previous result can be stated by saying that under the KMP, one has that CPCP and SR are equivalent properties. Since the CPCP and the SR are defined in terms of families of subsets relevant to the weak topology, it is then natural to wonder if translating the definitions of the CPCP and SR for a topology other than the weak one may draw similar results. In this note we study this possibility, showing that for a locally convex topology that contains the weak topology, it is also obtained that the new CPCP and SR properties for the above topology are equivalent under the KMP, which generalizes the results known up to now. As a consequence, a new strategy to obtain the equivalence between the  RNP and KMP is given: starting from a Banach space X failing CPCP for the weak topology (otherwise we already know that RNP and KMP are equivalent for X), it would be enough to build a locally convex topology $\tau$ on the unit ball $B_X$ of $X$, satisfying that for this new topology $B_X$ still fails CPCP while verifying SR, which implies, applying the previously announced result, that $X$ does not enjoy KMP either. In fact, if one starts with a Banach space whose unit ball has nonempty relatively weakly open subsets with uniformly positive diameter, we prove that it is enough to assume SR for open subsets in order to get the failure of KMP. The above strategy is based on the possibility of building a suitable topology for which the CPCP and SR properties are not equivalent. The most ambitious question would be whether this can be done for any space without the classical CPCP, but we don't have an answer to this question for now. However, another natural question is whether this can be done in some space where  CPCP and SR properties are equivalent for the weak topology, or at least for some space where the unit ball does not satisfy either of the above two properties. The answer in this case is affirmative, since it is shown in this note that there is a locally convex topology on the unit ball $B$ of the space $c_0$, containing the weak topology, such that $B$ is SR for the family of open subsets of this topology, but it does not verify  CPCP for the same topology. Of course, such a topology is not the weak topology because $B$ is neither  CPCP nor SR for the weak topology.  

From the geometric point of view, there are also widely studied properties as a non-isomorphic counterpart to the properties we have talked about so far, they are the so-called diameter 2 properties, which are also defined through the same families of subsets of the unit ball of a Banach space relevant to the weak topology discussed above. A Banach space verifies the diameter two property for slices (slice-D2P), diameter two property (D2P) or strong diameter two property (SD2P) if each slice, nonempty weak open, or convex combination of slices or nonempty weak open subsets, respectively, of the unit ball of the space has diameter $2$. These properties are related to other well-known properties, such as the Daugavet property and octahedrality, and dualize as non-differentiability properties of the dual norm. Furthermore, it is known that the three previous properties are different and it is clear that SD2P implies D2P and D2P implies slice-D2P (see \cite{BLR1}, \cite{BLR2}, \cite{BLR4}). Precisely, to obtain the difference between the D2P and SD2P properties, an equivalent norm was constructed in the space $c_0$ in such a way that each nonempty weak open subset of the unit ball of $c_0$ has a diameter $2$ for the new norm, while for such a norm it is possible to find convex combinations of slices or nonempty weak open subsets with arbitrarily small diameter \cite{BLR2}. It is natural then, as previously stated for the CPCP and SR properties, to ask if it is possible to obtain a similar result  to the above for the unit ball of $c_0$ using the usual norm of $c_0$, but changing the definition of D2P and SD2P to another proper topology. The answer is yes,  in fact, for the previously mentioned topology built on the unit ball of $c_0$, it is obtained that the unit ball of $c_0$ verifies the D2P for this topology while each nonempty open subset for this topology contains convex combinations of relatively open subsets for the same topology with arbitrarily small diameter, so in particular it does not satisfy SD2P for the same topology. Of course, the diameters are measured here with the usual norm of $c_0$. Therefore, the differences between the D2P and SD2P properties are even greater than what has been known so far if they are defined for more general topologies, even when keeping the natural norm of the space.

\section{Main results}

Let $X$ be a Banach space, $D$ is a closed convex and bounded subset of $X$ and $\tau$ is a locally convex topology ($\tau$ has a countable basis of open subsets) in $D$ containing the relative weak topology on $D$. For every subset $C$ of $D$ we will denote $\restr{\tau}{C}$ the induced topology of $\tau$ in $D$.  With this notation, we recall the definitions for CPCP and SR. We say that $D$ has the\begin{enumerate} 
\item[i)] $\tau$-convex point of continuity property ($\tau$-CPCP) if, for every bounded and convex subset $C$ of $D$ and every $\ep>0$ there is a nonempty $\restr{\tau}{C}$-open set with diameter less than $\ep$.
 \item[ii)] $\tau$-strongly regular ($\tau$-SR) if, for every bounded and convex subset $C$ of $D$ and every $\ep>0$ there is a convex combination of nonempty $\restr{\tau}{C}$-open sets with diameter less than $\ep$.
\item[iii)] $\tau$-strongly regular ($\tau$-SR) for open subsets if, for every nonempty and convex $\tau$-open subset $O$ of $D$ and for every $\ep>0$ there is a convex combination of nonempty $\restr{\tau}{O}$-open subsets with diameter less than $\ep$.
\item[iv)] Krein-Milman property (KMP) if every nonempty closed and convex subset of $D$ has some extreme point.
\end{enumerate}

When $\tau$ is the weak topology restricted to D, we use CPCP, SR and SR for open subsets, and omit the reference to the topology. Also, when $D$ is the unit ball of a space $X$ and $\tau$ is defined on $X$, we say that $X$ satisfies $\tau$-CPCP, $\tau$-SR or $\tau$-SR for open subsets, respectively.

As we said in the introduction, it is well known that $D$ satisfies CPCP whenever $D$ is SR and satisfies KMP, that is, under KMP, CPCP and SR are equivalent.  Our mail goal is to get the same result for another topology other than the weak one. We start with a couple of easy lemmas, omitting the elementary proof of the first one.



\begin{lemma}\label{cauchy}
    Let $E$ be a closed subset of a Banach space $X$ and let $(S_n)$ be a decreasing sequence of subsets of $E$. If $\text{diam}(S_n)\to0$ then there is $x\in E$ such that every sequence $(x_n)\subset E$ with $x_n\in \overline{S_n}$ for every $n\in\N$ converges to $x$.
\end{lemma}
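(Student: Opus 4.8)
The plan is to reduce the statement to the completeness of $X$ by manufacturing a single Cauchy sequence out of the sets $S_n$, and then showing that its limit absorbs every admissible sequence. First I would dispose of the degenerate case: if some $S_n$ is empty then, since the sequence is decreasing, all later sets are empty as well, their closures are empty, no admissible sequence $(x_n)$ with $x_n\in\overline{S_n}$ exists at all, and the conclusion holds vacuously for any point of $E$ (which we take to be nonempty). Hence I may assume every $S_n$ is nonempty.

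Next I would pick one point $y_n\in S_n$ for each $n\in\N$. Because the sequence is decreasing, for $m\geq n$ we have $y_m\in S_m\subseteq S_n$, so $\norm{y_m-y_n}\leq\diam(S_n)$. Since $\diam(S_n)\to0$, these tail estimates force $(y_n)$ to be a Cauchy sequence. As $X$ is complete, $(y_n)$ converges to some $x\in X$; and since each $y_n$ lies in the closed set $E$, the limit satisfies $x\in E$. This $x$ is my candidate point.

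It then remains to verify that every admissible sequence converges to $x$. Given $(x_n)$ with $x_n\in\overline{S_n}$, I would fix $\e>0$ and choose $N$ so large that simultaneously $\diam(S_n)<\e$ and $\norm{y_n-x}<\e$ for all $n\geq N$. For such $n$, both $x_n$ and $y_n$ lie in $\overline{S_n}$, and since taking closures does not change diameters (so $\diam(\overline{S_n})=\diam(S_n)$), we obtain $\norm{x_n-y_n}\leq\diam(S_n)<\e$. The triangle inequality then gives $\norm{x_n-x}\leq\norm{x_n-y_n}+\norm{y_n-x}<2\e$, whence $x_n\to x$.

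There is no genuine obstacle here; the lemma is essentially a packaging of completeness. The only two points that require a little care are the passage to closures, so that admissible sequences living in $\overline{S_n}$ rather than in $S_n$ are still controlled by $\diam(S_n)$, and the verification that the limit returns to $E$, which is precisely where the closedness of $E$ enters.
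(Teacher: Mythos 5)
Your proof is correct, and the paper in fact omits the proof of this lemma entirely, stating it is elementary; your argument (pick $y_n\in S_n$, use the decreasing diameters to get a Cauchy sequence, use closedness of $E$ for the limit, and use $\diam(\overline{S_n})=\diam(S_n)$ to absorb arbitrary admissible sequences) is exactly the standard completeness argument the authors had in mind. The small points you flag --- the vacuous case of empty $S_n$ and the passage to closures --- are handled correctly.
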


\begin{lemma}\label{small}
    If $D$ is $\tau$-strongly regular and $C$ is a nonempty convex subset of $D$, then, for every $\ep>0$ and every nonempty set $O\in\restr{\tau}{C}$ there is a a finite family of nonempty sets $O_1,\dots O_n\in\restr{\tau}{C}$, with $O_i\subset O$, such that
$$\text{diam}\Big(\sum\limits_{i=1}^n\frac{1}{n}O_i\Big)<\ep.$$
\end{lemma}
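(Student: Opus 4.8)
The plan is to deduce this localized, equal-weight statement from the global, arbitrary-weight definition of $\tau$-strong regularity in two stages: first I would localize the problem inside $O$, and then I would convert the resulting arbitrary convex combination into one with equal weights $1/n$.

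For the localization, fix $x_0\in O$ and write $O=U\cap C$ with $U$ being $\tau$-open in $D$. Since $\tau$ is locally convex, there is a convex $\tau$-open set $V$ with $x_0\in V\subseteq U$. Then $A:=V\cap C$ is a nonempty, bounded, convex subset of $D$ contained in $O$, and it is itself relatively $\tau$-open in $C$. Applying $\tau$-strong regularity to the convex set $A$ (with $\ep/2$ in place of $\ep$) produces nonempty $\restr{\tau}{A}$-open sets $W_1,\dots,W_m$ and weights $\lambda_1,\dots,\lambda_m>0$ with $\sum_i\lambda_i=1$ and $\operatorname{diam}\left(\sum_i\lambda_i W_i\right)<\ep/2$. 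Because $A$ is relatively $\tau$-open in $C$, each $W_i$ is also relatively $\tau$-open in $C$ and satisfies $W_i\subseteq A\subseteq O$; one may discard any zero weights so as to assume all $\lambda_i>0$.

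For the equal-weight conversion, I would approximate the weights by rationals with a common denominator. Fix a large integer $q$, set $\mu_i:=\lfloor q\lambda_i\rfloor/q$ for $i<m$ and $\mu_m:=1-\sum_{i<m}\mu_i$, so that $\mu_i=p_i/q$ for nonnegative integers $p_i$ with $\sum_i p_i=q$ and $\sum_i|\mu_i-\lambda_i|\to 0$ as $q\to\infty$. Listing each $W_i$ with multiplicity $p_i$ gives a family $O_1,\dots,O_n$ (with $n=q$) of nonempty relatively $\tau$-open subsets of $O$, and the key identity is that
$$\operatorname{diam}\Big(\sum_{j=1}^n\tfrac{1}{n}O_j\Big)=\operatorname{diam}\Big(\sum_{i=1}^m\mu_i\,\co(W_i)\Big)=\operatorname{diam}\Big(\sum_{i=1}^m\mu_i W_i\Big),$$
where the first equality comes from averaging the repeated copies of each $W_i$ into a point of $\co(W_i)$, and the second from the fact that passing to convex hulls does not change the diameter of a Minkowski convex combination. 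Finally, a perturbation estimate controls the right-hand side: for $w_i,w_i'\in W_i$ one has $\norm{\sum_i\mu_i(w_i-w_i')}\leq\norm{\sum_i\lambda_i(w_i-w_i')}+\operatorname{diam}(D)\sum_i|\mu_i-\lambda_i|$, whence $\operatorname{diam}\left(\sum_i\mu_i W_i\right)\leq\operatorname{diam}\left(\sum_i\lambda_i W_i\right)+\operatorname{diam}(D)\sum_i|\mu_i-\lambda_i|$. Choosing $q$ large enough that the last term falls below $\ep/2$ yields $\operatorname{diam}\left(\sum_j\tfrac{1}{n}O_j\right)<\ep$, as required.

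The step I expect to be the main obstacle is the equal-weight conversion rather than the localization. The delicate point is that the equal-weight Minkowski combination $\sum_j\frac{1}{n}O_j$ is genuinely a larger set than the original $\sum_i\mu_i W_i$, since the repeated copies of a given $W_i$ may be chosen independently, so one cannot simply quote the diameter bound for $\sum_i\mu_i W_i$; it is precisely the observation that convex hulls preserve diameters that reconciles the two and makes the repetition trick legitimate. Keeping the rational weights simultaneously positive, summing to one, and close to the $\lambda_i$ is a routine but necessary bookkeeping part of the same step.
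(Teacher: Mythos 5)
Your proof is correct and follows essentially the same route as the paper: both arguments first use local convexity of $\tau$ to replace $O$ by a nonempty convex relatively $\tau$-open subset, then apply $\tau$-strong regularity inside that subset, noting that sets relatively open in it are relatively open in $C$ and contained in $O$. The only difference is that the paper's one-line proof invokes strong regularity directly with equal weights $1/n$, silently absorbing the rational-approximation and repetition argument you make explicit; your observation that averaged repeated copies land in $\co(W_i)$ and that passing to convex hulls does not change the diameter of the Minkowski combination is exactly the standard justification for that implicit step.
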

\begin{proof}
Since $\tau$ is locally convex and $C$ is convex we get that $\restr{\tau}{C}$ is locally convex. Hence, there is a nonempty convex set $\widetilde O\in\restr{\tau}{C}$ with $\widetilde O\subset O$. Now, as $D$ is $\tau$-strongly regular, we deduce that there are nonempty sets $O_1,\dots,O_n\in \restr{\tau}{\widetilde O}$ such that
$$\text{diam}\Big(\sum_i\frac{1}{n}O_i\Big)<\ep.$$
We are now done because $O_i\in\restr{\tau}{\widetilde O}\subset \restr{\tau}{C}$ for every $i=1,\dots,n$. 
\end{proof}

Inspired by Schachermayer's work in \cite{S2}, we introduce in the following definition the subdiameter (SD) in order to estimate from bellow the diameter of  convex combinations of weakly open subsets, taking into account the coefficients of the convex combinations.

\textbf{Notation.} Given some countable set $S$, we denote $\ell_1(S)$ the summable functions from $S$ to $\R$. We also denote $S^+_{\ell_1(S)}$ the positive face of the sphere of $\ell_1(S)$, that is,
$$S^+_{\ell_1(S)}=\bigg\{(x_k)_{k\in S}\;:\;x_k\ge0\text{ for }k\in S\text{ and }\sum_{k\in S}x_k=1\bigg\}.$$
If $\# S=n\in\N$ then $\ell_1(S)$ is lattice isometric to $\ell_1(\{1,\dots,n\})$, which we denote simply by $\ell_1^n$.

\begin{definition}
    Let $C$ be a convex subset of $D$ and $n\in\N$. Given a $n$-tuple of nonempty sets $(O_i)_{i=1}^n\subset \restr{\tau}{C}$ and $\lambda\in S^+_{\ell_1^n}$, we define the sub-diameter of $\sum_i\lambda(i)O_i$ as
    $$SD(\lambda,(O_i))=\inf\Big\{\text{diam}\Big( \sum_i\lambda(i) U_i \Big)\;:\; U_i\in \restr{\tau}{O_i}\setminus\{\emptyset\}\;\text{for}\;1\le i\le n\Big\}.$$
\end{definition} 

The next two lemmas use the definition of subdiameter to get a relation between the diameter of a convex combination of weak open subsets and the distance from a finite set to these convex combination.

\begin{lemma}\label{sep}
    Let $C$ be a convex subset of $D$, $\ep>0$, $n,k\in\N$, $\lambda\in S^+_{\ell_1^n}$, $\{x_1,\dots, x_k\}\subset X$ and a $n$-tuple of nonempty sets $(\widetilde O_i)_{i=1}^n\subset \restr{\tau}{C}$. Then, for $1\le i\le n$ there is a nonempty  $\restr{\tau}{\widetilde O_i}$-open set $O_i$ such that:
    \begin{equation}\label{eq1}\text{diam}\Big(\sum_i\lambda(i) O_i\Big)\le SD(\lambda,(O_i))+\ep,\end{equation}
      \begin{equation}\label{eq2}d\Big(\{x_1,\dots,x_k\},\sum_i\lambda(i)O_i\Big)\ge\frac{SD(\lambda,(O_i))}{2}-\ep.\end{equation}
\end{lemma}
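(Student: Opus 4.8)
The plan is to decouple the two requirements: first secure \eqref{eq1} by a single near-optimal shrinking, then secure \eqref{eq2} by a finite induction over the points $x_1,\dots,x_k$. Everything rests on the elementary monotonicity of the sub-diameter. If, for each $i$, $P_i'$ is a nonempty relatively $\restr{\tau}{P_i}$-open subset of $P_i$, then $\restr{\tau}{P_i'}\subseteq\restr{\tau}{P_i}$, so the infimum defining $SD(\lambda,(P_i'))$ runs over a subfamily of the one defining $SD(\lambda,(P_i))$; hence $SD(\lambda,(P_i'))\ge SD(\lambda,(P_i))$. Writing $s=SD(\lambda,(\widetilde O_i))$, it follows that every tuple $(O_i)$ obtained from $(\widetilde O_i)$ by successive relatively $\tau$-open shrinkings satisfies $SD(\lambda,(O_i))\ge s$, and therefore $\text{diam}(\sum_i\lambda(i)O_i)\ge SD(\lambda,(O_i))\ge s$ at every stage. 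This diameter floor is the structural fact that makes the argument run.

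For \eqref{eq1} I would first invoke the definition of $s$ to pick nonempty $O_i^{0}\in\restr{\tau}{\widetilde O_i}$ with $\text{diam}(\sum_i\lambda(i)O_i^{0})<s+\ep$. The decisive observation is that this bound is inherited by every further shrinking: if $O_i\subseteq O_i^{0}$ then $\sum_i\lambda(i)O_i\subseteq\sum_i\lambda(i)O_i^{0}$, so $\text{diam}(\sum_i\lambda(i)O_i)<s+\ep\le SD(\lambda,(O_i))+\ep$. Thus \eqref{eq1} will hold for the final tuple no matter what additional shrinking I do, and the diameter of the combination stays trapped in $[s,s+\ep)$.

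The heart of the matter is \eqref{eq2}, which I would reach by pushing the combination away from the points one at a time, starting from $(O_i^{0})$. The main obstacle is precisely that the combination can never be made small — its diameter is bounded below by $s$ — so I cannot confine it to a tiny neighbourhood avoiding $x_j$; I can only separate it off by a supporting functional, and this is what forces the factor $\tfrac12$ in \eqref{eq2}. At step $j$, given a shrinking $(O_i^{j-1})$ of $(O_i^{0})$ with $A_{j-1}=\sum_i\lambda(i)O_i^{j-1}$, the bound $\text{diam}(A_{j-1})\ge s$ lets me choose $a=\sum_i\lambda(i)a_i$ and $b=\sum_i\lambda(i)b_i$ in $A_{j-1}$, with $a_i,b_i\in O_i^{j-1}$, such that $\norm{a-b}>s-\delta$ for a small $\delta$ to be fixed. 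By the triangle inequality one of them, say $a$, has $\norm{a-x_j}\ge\tfrac{s-\delta}{2}$. Choosing $f_j\in X^*$ with $\norm{f_j}=1$ and $f_j(a-x_j)=\norm{a-x_j}$, I shrink each factor along this functional,
$$O_i^{j}=\big\{c\in O_i^{j-1}\;:\;f_j(c)>f_j(a_i)-\gamma\big\},$$
which is nonempty (it contains $a_i$) and relatively weakly open, hence lies in $\restr{\tau}{O_i^{j-1}}$ because $\tau$ contains the relative weak topology. Every $c=\sum_i\lambda(i)c_i\in A_j:=\sum_i\lambda(i)O_i^{j}$ then satisfies $f_j(c)>f_j(a)-\gamma$, so $\norm{c-x_j}\ge f_j(c)-f_j(x_j)>\norm{a-x_j}-\gamma\ge\tfrac{s-\delta}{2}-\gamma$, whence $d(\{x_j\},A_j)\ge\tfrac{s-\delta}{2}-\gamma$.

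Finally I would assemble the estimate. Since the shrinkings are nested, $A_k\subseteq A_j$ for $j\le k$, so $d(\{x_j\},A_k)\ge d(\{x_j\},A_j)\ge\tfrac{s-\delta}{2}-\gamma$ for each $j$, giving $d(\{x_1,\dots,x_k\},A_k)\ge\tfrac{s-\delta}{2}-\gamma$. Setting $(O_i)=(O_i^{k})$, the nesting shows each $O_i$ is a nonempty relatively $\restr{\tau}{\widetilde O_i}$-open set, and \eqref{eq1} holds as explained. To match \eqref{eq2} I compare with $SD(\lambda,(O_i))$: since $SD(\lambda,(O_i))\le\text{diam}(A_k)<s+\ep$, one has $\tfrac{SD(\lambda,(O_i))}{2}-\ep<\tfrac{s}{2}-\tfrac{\ep}{2}$, so fixing $\delta,\gamma$ with $\tfrac{\delta}{2}+\gamma\le\tfrac{\ep}{2}$ (say $\delta=\gamma=\ep/4$) yields $d(\{x_1,\dots,x_k\},A_k)\ge\tfrac{s}{2}-\tfrac{\ep}{2}\ge\tfrac{SD(\lambda,(O_i))}{2}-\ep$, which is \eqref{eq2}. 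The only delicate bookkeeping is that the floor $s$ survives all the shrinkings, guaranteeing two $\approx s$-separated points at every step; this is exactly the monotonicity of $SD$ recorded at the outset.
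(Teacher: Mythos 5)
Your proof is correct and follows essentially the same strategy as the paper: first pass to a near-$SD$-optimal tuple $(O_i^0)$ so that, by monotonicity of $SD$ under relatively open shrinking, \eqref{eq1} is stable under all further refinements, and then handle $x_1,\dots,x_k$ one at a time by slicing each factor with a norm-one functional, the factor $\tfrac12$ coming from a nearly diametral pair of the current combination. The only deviation is local and harmless: where the paper norms $a-b$, arranges $x^*(x_j)\le x^*\big(\tfrac{a+b}{2}\big)$ by a sign change, and slices each $O_i^{j-1}$ at $\sup x^*(O_i^{j-1})$, you instead pick the point $a$ of the pair with $\|a-x_j\|\ge\tfrac{s-\delta}{2}$, norm $a-x_j$ itself, and anchor the slices at the decomposition values $f_j(a_i)$ (nonempty since they contain $a_i$), which streamlines the paper's midpoint-and-sign bookkeeping while producing the same estimates.
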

\begin{proof}
    By definition, for each $i=1,\dots,n$ there must be $O_i^0\in \restr{\tau}{\widetilde O_i}\setminus\{\emptyset\}$ such that
    \begin{equation}\label{eq3}\text{diam}\Big(\sum_i\lambda(i)O_i^0\Big)\le SD(\lambda,(\widetilde O_i))+\ep/2\le SD(\lambda,(O_i^0))+\ep/2.\end{equation}
    We will be done if we find $O_i\in\restr{\tau}{O_i^0}\setminus\{\emptyset\}$ for $1\le i\le n$ satisfying \eqref{eq2}. Indeed, we would have from \eqref{eq3} that
    $$\text{diam}\Big(\sum_i\lambda(i)O_i\Big)\le\text{diam}\Big(\sum_i\lambda(i)O_i^0\Big)\le SD(\lambda,(O_i^0))+\ep\le SD(\lambda,(O_i))+\ep,$$
    and thus \eqref{eq1} would also hold.
    
    Let us then prove by induction on $k\in\N$ that there is for every 
$i\le n$ a nonempty set $O_i\in\restr{\tau}{O_i^0}$  satisfying \eqref{eq2}.
    
    \textbf{First step of the induction ($k=1$):} Clearly, there are $a,b\in \sum_i\lambda(i)O_i^0$ and $x^*\in S_{X^*}$ such that
    $$x^*(a-b)\ge SD(\lambda,(O_i^0))-\ep/2.$$
    We may assume without loss of generality that the last inequality holds together with $x^*(x_1)\le x^*\big(\frac{a+b}{2}\big)$ (changing sign of $x^*$ and swapping the roles of $a$ and $b$ if necessary). We define now
    $$O_i^1=O_i^0\cap\{x\in D\;:\; x^*(x)>\sup x^*(O_i^0)-\ep/2\}.$$
    Clearly, $O_i^1\in \restr{\tau}{O_i^0}\setminus\{\emptyset\}$ for every $i$. Let us prove that $O_1^1,\dots O_n^1$ satisfy \eqref{eq2} for $k=1$:

    If $y\in \sum_i\lambda(i)O_i^1$ then $y=\sum_i\lambda(i)y_i$ where $y_i\in O_i^1$. Therefore,
    $$\begin{aligned}x^*(y)=\sum_i\lambda(i)x^*(y_i)>\sum_i\lambda(i)\sup x^*(O^0_i)-\ep/2\ge&\sup x^*\Big(\sum_i\lambda(i)O_i^0\Big)-\ep/2\\\ge& x^*(a)-\ep/2.\end{aligned}$$
    Hence, for every $y\in \sum_i\lambda(i)O_i^1$,
    \begin{equation}\label{eq4}\begin{aligned}\|y-x_1\|\ge x^*(y-x_1)\ge x^*(a)-x^*\Big(\frac{a+b}{2}\Big)-\ep/2=&x^*\Big(\frac{a-b}{2}\Big)-\ep/2\\\ge& \frac{SD(\lambda,(O_i^0))}{2}-\frac{3\ep}{4}.\end{aligned}\end{equation}
    Clearly, from \eqref{eq3} we have that
    $$SD(\lambda, (O_i^1))\le \text{diam}\Big(\sum_i\lambda(i)O_i^1\Big)\le\text{diam}\Big(\sum_i\lambda(i)O_i^0\Big)\le SD(\lambda,(O_i^0))+\ep/2.$$
    Therefore, putting together this last inequality and \eqref{eq4} we finish this step.
    
    \textbf{Inductive step:} Let us assume that for some $m\in\N$ there are nonempty sets $O_1^m,\dots,O_n^m\in \restr{\tau}{O_i^0}$ satisfying equation \eqref{eq2} for $k=m$. We may then construct $O_1^{m+1},\dots O_n^{m+1}\in \restr{\tau}{C}\setminus\{\emptyset\}$ with $O_i^{m+1}\subset O_i^0$ and satisfying equation \eqref{eq2} for $k=m+1$. The construction is analogous to the one given for the case $k=1$. We just take $x^*\in S_{X^*}$ and $a,b\in \sum_i\lambda(i)O_i^m$ such that $x^*(a-b)\ge SD(\lambda, (O_i))-\ep$ and $x^*(x_{m+1})\le x^*\big(\frac{a+b}{2}\big)$. Then, the sets given by
    $$O_i^{m+1}=O_i^m\cap\{x\in D\;:\; x^*(x)>\sup x^*(O_i^m)-\ep/2\}$$
    make the trick. Hence, \eqref{eq2} holds for $O_i=O_i^k$ for every $i\le n$.
\end{proof}

\begin{lemma}\label{sep2}
    Let $C$ be a convex subset of $D$, $\ep>0$, $n,k\in\N$ and $\{x_1,\dots, x_k\}\subset X$. If $(\widetilde O_i)_{i=1}^n\subset \restr{\tau}{C}$, then there is another $n$-tuple of nonempty $\restr{\tau}{C}$-open sets $(O_i)_{i=1}^n$ with $O_i\subset \widetilde O_i$ such that
    $$\text{diam}\bigg(\sum_{i=1}^n\lambda(i)O_i\bigg)\le 2d\bigg(\{x_1,\dots,x_k\},\sum_{i=1}^n\lambda(i)O_i\bigg)+\ep\;\;\;\;\forall \lambda\in S_{\ell_1^n}^+.$$
\end{lemma}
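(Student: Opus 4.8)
The plan is to upgrade the single-parameter estimate of Lemma \ref{sep} to one that holds uniformly over the compact simplex $S_{\ell_1^n}^+$, by combining a compactness/net argument with a monotonicity observation that permits iterating Lemma \ref{sep} finitely many times without spoiling the estimates secured at earlier steps.

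First I would record two elementary monotonicity facts, valid for any fixed $\lambda\in S_{\ell_1^n}^+$. If each $O_i$ is replaced by a nonempty relatively $\tau$-open subset $O_i'\subset O_i$, then $\diam\big(\sum_i\lambda(i)O_i'\big)\le\diam\big(\sum_i\lambda(i)O_i\big)$, while $d\big(\{x_1,\dots,x_k\},\sum_i\lambda(i)O_i'\big)\ge d\big(\{x_1,\dots,x_k\},\sum_i\lambda(i)O_i\big)$, simply because shrinking a set decreases its diameter and increases its distance to a fixed finite set. Hence an inequality of the form $\diam\big(\sum_i\lambda(i)O_i\big)\le 2d\big(\{x_1,\dots,x_k\},\sum_i\lambda(i)O_i\big)+c$ is \emph{preserved} when the sets are shrunk to relatively open subsets: the left-hand side can only decrease and the right-hand side can only increase. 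This is the key feature that makes a finite iteration work.

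Next I would establish uniform Lipschitz dependence on $\lambda$. Writing $R=\sup_{x\in D}\|x\|<\infty$ (finite since $D$ is bounded), a direct estimate using the common parametrization $y=\sum_i\lambda(i)y_i$ with $y_i\in O_i$ shows that, for \emph{any} fixed choice of sets $O_i\subset D$, both maps $\lambda\mapsto\diam\big(\sum_i\lambda(i)O_i\big)$ and $\lambda\mapsto d\big(\{x_1,\dots,x_k\},\sum_i\lambda(i)O_i\big)$ are Lipschitz in the $\ell_1^n$-norm with constants $2R$ and $R$ respectively, independently of the $O_i$. Since $S_{\ell_1^n}^+$ is compact, I then fix a finite $\delta$-net $\{\lambda^1,\dots,\lambda^p\}$ with $\delta$ so small that $4R\delta\le\ep/2$; a bound $\diam\le 2d+\ep/2$ at a net point then propagates, via these Lipschitz estimates, to $\diam\le 2d+\ep$ at every $\lambda$ within $\delta$ of it.

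Finally I would run the iteration. Starting from the given tuple $(\widetilde O_i)$, apply Lemma \ref{sep} with parameter $\ep/6$ at $\lambda^1$ to obtain relatively open $O_i^1\subset\widetilde O_i$ for which, combining \eqref{eq1} and \eqref{eq2} (so that $SD(\lambda^1,(O_i^1))\le 2d+\ep/3$ and hence $\diam\le SD+\ep/6$), one gets $\diam\big(\sum_i\lambda^1(i)O_i^1\big)\le 2d\big(\{x_1,\dots,x_k\},\sum_i\lambda^1(i)O_i^1\big)+\ep/2$. Then apply Lemma \ref{sep} at $\lambda^2$ to $(O_i^1)$ to get $O_i^2\subset O_i^1$, and continue up to $\lambda^p$. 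By the monotonicity observation, each shrinking step preserves every estimate already established at the earlier net points, so the final tuple $O_i:=O_i^p\subset\widetilde O_i$, which is relatively $\tau$-open in $C$, satisfies $\diam\le 2d+\ep/2$ simultaneously at $\lambda^1,\dots,\lambda^p$. The net estimate from the previous paragraph then delivers the claimed inequality for all $\lambda\in S_{\ell_1^n}^+$. The step to watch is precisely the compatibility between shrinking for a new $\lambda^m$ and the accumulated estimates for the previous ones; this is exactly what the monotonicity of diameter and distance resolves, since optimizing for $\lambda^m$ never invalidates the earlier work.
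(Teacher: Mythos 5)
Your proof is correct and follows essentially the same route as the paper's: a finite net of the simplex $S^+_{\ell_1^n}$, a recursive application of Lemma \ref{sep} producing nested tuples $O_i^{j+1}\subset O_i^j$, monotonicity of diameter and distance under shrinking to preserve earlier estimates, and a Lipschitz-in-$\ell_1^n$ propagation from the net points to arbitrary $\lambda$. The only difference is cosmetic: you make the Lipschitz constants $2R$ and $R$ explicit and size the net by $4R\delta\le\ep/2$, where the paper simply takes an $\frac{\ep}{4n}$-net and absorbs the constants.
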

\begin{proof}
    Let us consider $(\lambda_i)_{i=1}^N$ an $\frac{\ep}{4n}$-net of $S^+_{\ell_1^n}$. It is now straightforward to construct, by recursively using Lemma \ref{sep}, $n$-tuples $(O_i^j)_{i=1}^n$ for $j\le N$ satisfying
    \begin{enumerate}
        \item $\emptyset\neq O_i^1\subset \widetilde O_i$ and $\emptyset\neq O_i^{j+1}\subset O_i^j$ for every $j<N$ and $i\le n$.
        \item $\text{diam}\bigg(\sum_{i=1}^n\lambda_j(i)O_i^j\bigg)\le 2d\bigg(\{x_1,\dots,x_k\},\sum_{i=1}^n\lambda_j(i)O_i^j\bigg)+\ep/4$ for $j\le N.$
    \end{enumerate}
        We take $O_i=O_i^N$ for $i=1,\dots,n$ so that for every $j\le N$,
        \begin{equation}\label{eqmainine}
            \begin{aligned}\text{diam}\bigg(\sum_{i=1}^n\lambda_j(i)O_i\bigg)\le&\text{diam}\bigg(\sum_{i=1}^n\lambda_j(i)O^j_i\bigg)\\\le& 2d\bigg(\{x_1,\dots,x_k\},\sum_{i=1}^n\lambda_j(i)O_i^j\bigg)+\ep/4\\\le& 2d\bigg(\{x_1,\dots,x_k\},\sum_{i=1}^n\lambda_j(i)O_i\bigg)+\ep/4.\end{aligned}
        \end{equation}
        If we take now $\lambda\in S^+_{\ell_1}$ arbitrary, then there must be $j\le N$ such that $\|\lambda_j-\lambda\|_{\ell_1^n}\le\frac{\ep}{4n}$. It is straightforward to check that
        $$\text{diam}\bigg(\sum_{i=1}^n\lambda(i)O_i\bigg)\le\text{diam}\bigg(\sum_{i=1}^n\lambda_j(i)O_i\bigg)+\ep/2,$$
        $$d\bigg(\{x_1,\dots,x_k\},\sum_{i=1}^n\lambda(i)O_i\bigg)\ge d\bigg(\{x_1,\dots,x_k\},\sum_{i=1}^n\lambda_j(i)O_i\bigg)-\ep/4.$$
        We are therefore done since
        $$\begin{aligned}\text{diam}\bigg(\sum_{i=1}^n\lambda(i)O_i\bigg)\le&\text{diam}\bigg(\sum_{i=1}^n\lambda_j(i)O_i\bigg)+\ep/2\\\stackrel{\eqref{eqmainine}}{\le}& 2d\bigg(\{x_1,\dots,x_k\},\sum_{i=1}^n\lambda_j(i)O_i\bigg)+3\ep/4\\\le& 2d\bigg(\{x_1,\dots,x_k\},\sum_{i=1}^n\lambda(i)O_i\bigg)+\ep.\end{aligned}$$
\end{proof}

Now, we are ready to get our main result

\begin{theorem}\label{Stheorem}
    Let $X$ be a separable Banach space and $D$ a $\tau$-strongly regular closed, bounded and convex subset of $X$ without the $\tau$-CPCP, where $\tau$ is a locally convex topology on $D$ containing the weak topology on $D$. Then, there is a closed convex subset of $D$ without extreme points, that is $D$ fails KMP.
\end{theorem}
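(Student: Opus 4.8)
The plan is to turn the failure of $\tau$-CPCP into a convex ``thick'' region inside $D$ and then, using $\tau$-strong regularity together with the sub-diameter estimates of Lemmas~\ref{sep} and~\ref{sep2}, to build inside it a bounded bush of barycenters whose closed convex hull has no extreme point.

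First I would fix, from the failure of $\tau$-CPCP, a nonempty bounded convex set $C\subseteq D$ and a constant $\ep_0>0$ such that every nonempty $\restr{\tau}{C}$-open set has norm-diameter at least $\ep_0$; this is exactly the negation of the defining property. I record the two mechanisms I will play against each other: on the one hand every relatively $\tau$-open subset of $C$ is ``large'' (diameter $\ge\ep_0$), which will keep the branches of the construction spread apart; on the other hand $\tau$-strong regularity, in the form of Lemma~\ref{small}, produces inside every such open set an equal-weight convex combination of nonempty relatively $\tau$-open sets of arbitrarily small diameter, which will make barycenters well defined. Since $X$ is separable I also fix the tool that will substitute for compactness, namely a way of pushing newly created points away from previously created ones via the distance estimate of Lemma~\ref{sep2}.

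Then I would carry out a level-by-level induction producing, for every node $s$ of a finitely branching tree $\mathcal T$, a nonempty $\restr{\tau}{C}$-open set $O_s$, positive weights $\lambda_{s,i}$ summing to one over the children of $s$, and a point $x_s\in\overline{O_s}$, so that:
\begin{enumerate}
\item[(a)] each $O_{s^\frown i}$ is a relatively $\tau$-open subset of $O_s$ arising from an application of Lemma~\ref{small}, so that $\diam\big(\sum_i\lambda_{s,i}O_{s^\frown i}\big)$ is as small as we wish;
\item[(b)] $x_s=\sum_i\lambda_{s,i}\,x_{s^\frown i}$, each $x_{s^\frown i}$ being pinned inside $O_{s^\frown i}$ as the common limit granted by Lemma~\ref{cauchy} of the nested small-diameter convex combinations produced along that branch;
\item[(c)] distinct nodes satisfy $\|x_s-x_t\|\ge\delta$ for a fixed $\delta>0$: siblings are forced apart because their containing open sets have diameter $\ge\ep_0$, while cross-branch separation is obtained by invoking Lemma~\ref{sep2} with the finite set $F$ of all previously constructed points, the bound $\diam\le 2\,d(F,\cdot)+\ep$ turning diameter $\ge\ep_0$ into distance $\ge(\ep_0-\ep)/2$ from $F$.
\end{enumerate}
The set I propose is $K=\cco\{x_s:s\in\mathcal T\}$, a closed bounded convex subset of $D$.

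It remains to see that $K$ has no extreme point, and this is where the argument really has to work. By construction every node $x_s$ is the convex combination $\sum_i\lambda_{s,i}x_{s^\frown i}$ of at least two of its children, which are distinct points of $K$ by (c); hence no $x_s$ is extreme. The delicate and, I expect, hardest step is to rule out extreme points that are not nodes. Here (c) makes $\{x_s:s\in\mathcal T\}$ uniformly discrete, hence norm-closed with no accumulation points, so that the classical Milman theorem, if it applied, would give $\ext{K}\subseteq\overline{\{x_s\}}=\{x_s\}$ and finish the proof. The genuine obstacle is that $K$ need not be compact in any topology, so Milman's theorem is not available as such; this is precisely the point at which the sub-diameter estimates of Lemmas~\ref{sep} and~\ref{sep2} must be used as a quantitative, separation-based substitute for compactness, showing that any putative extreme point of $\cco\{x_s\}$ would have to coincide with one of the $\delta$-separated points $x_s$. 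Establishing this compactness-free, Milman-type localization of $\ext{K}$ is the crux of the whole argument; once it is in place one gets $\ext{K}=\emptyset$, so $K$ is a closed convex subset of $D$ without extreme points and $D$ fails KMP.
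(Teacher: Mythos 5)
Your scaffolding matches the paper's Steps 1--2 quite closely (a finitely branching tree of nested nonempty $\restr{\tau}{C}$-open sets produced by Lemma~\ref{small}, limit points pinned by Lemma~\ref{cauchy}, and the bush relation $x_s=\sum_i\lambda_{s,i}x_{s^\frown i}$ showing no \emph{node} is extreme), but the proof has a genuine gap exactly where you say the argument ``really has to work'': ruling out extreme points of $K$ that are not nodes. You leave this as an unproven ``compactness-free Milman-type localization'' $\ext{K}\subseteq\{x_s\}$, and the route you sketch toward it rests on a separation claim (c) that is not justified and is in fact not available. That each $O_{s^\frown i}$ has diameter $\ge\ep_0$ does not force the chosen points $x_{s^\frown i}$ (or even the sets themselves, which may overlap heavily) to be $\delta$-separated; and Lemma~\ref{sep2} converts a \emph{lower} bound on the diameter of a convex combination into a distance lower bound, whereas the sets in which your points $x_s$ are pinned are precisely the nested convex combinations whose diameters tend to $0$ by $\tau$-SR, so no distance estimate against previously constructed points follows. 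Tellingly, the paper never separates its points $z_\omega$ at all and needs no Milman-type statement.

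What the paper does instead, and what your proposal misses, is the following use of separability: a dense sequence $(x_n)$ of the \emph{whole space} $X$ is fixed in advance, and property (2) of the construction is arranged, via the $\frac{\ep}{4n}$-net over the simplex in Lemma~\ref{sep2}, \emph{uniformly over all} $\lambda\in S^+_{\ell_1(\Omega_n)}$ against the dense prefix $\{x_1,\dots,x_n\}$ --- not against finitely many constructed nodes. This uniformity is essential because the weights arising from an arbitrary $a\in E$ are unknown in advance: given $a\in E$, one extracts limit weights $\lambda(\omega)$ and limit points $a(\omega)\in E$ with $a=\sum_{\omega\in\Omega_n}\lambda(\omega)a(\omega)$ for every $n$ (Claims 1 and 2, using Lemma~\ref{cauchy} and the fact that $\diam\bigl(\sum_{\omega\in\Omega_n}\lambda(\omega)O_\omega\bigr)\le 2d(\{x_1,\dots,x_n\},a)+2^{-n}\to0$). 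If $a$ were extreme, then $a(\omega)=a$ for every $\omega$ in the support, so $a$ lies in $\overline{O_{\omega_n}}$ along an infinite branch; applying property (2) to the trivial combination (a single $O_{\omega_n}$, whose diameter is $\ge\delta$ by failure of $\tau$-CPCP) gives $d(\{x_1,\dots,x_n\},a)\ge\delta/2-2^{-n-1}$ for all $n$, contradicting density of $(x_n)$. Thus non-extremality of \emph{every} point of $E$ is obtained directly, with no discreteness of the nodes and no localization of $\ext{E}$. To repair your proposal you would need to replace (c) and the Milman step by this mechanism: net the simplex in each application of Lemma~\ref{sep2} and separate from the dense sequence of $X$, then run the extreme-point-forces-a-branch contradiction.
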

\begin{proof}
    Let $C$ be a bounded and convex subset of $D$ and $\delta>0$ such that every $\restr{\tau}{C}$-open set has diameter greater than $\delta>0$. We assume without loss of generality that $C\subset B_X$. Our aim is to find out a closed bounded and convex subset of $D$ without extreme points. We will split the proof into three different steps. The first two steps will deal with the construction of a particular set $E\subset D$ whereas in the final step we will prove that $E$ is the sought set without extreme points.

    Since $X$ is separable we may consider a sequence $(x_n)$ dense in $X$. We are going to inductively define a sequence of indexes sets $(\Omega_n)$ satisfying that $\Omega_1=\{1,\dots,m_1\}$ and $\Omega_{n+1}=\Omega_n\times\{1,\dots,m_{n+1}\}$ for every $n\in\N$ where $(m_n)\subset \N$.
    
    \textbf{Step 1.} We are going to define inductively the sequence $(\Omega_n)$ altogether with a family of nonempty $\restr{\tau}{C}$-open sets indexed by $\Omega=\bigcup_n\Omega_n$, $(O_\omega)_{\omega\in\Omega}$ satisfying the following properties for every $n\in\N$,
    \begin{enumerate}
        \item \label{it1} $O_{(\omega,i)}\subset O_\omega$ for every $\omega \in\Omega_n$ and $i\in\{1,\dots,m_{n+1}\}$.
        \item \label{it2} For every $\lambda\in S^+_{\ell_1(\Omega_n)}$,
        $$\text{diam}\Big(\sum_{\omega\in\Omega_n}\lambda(\omega)O_{\omega}\Big)\le 2d\Big(\{x_1,\dots,x_n\},\sum_{\omega\in\Omega_n}\lambda(\omega)O_\omega\Big)+2^{-n}.$$
        \item \label{it4} For every $\omega \in\Omega_n$,
        $$\text{diam}\bigg(\sum_{i=1}^{m_{n+1}}\frac{1}{m_{n+1}}O_{(\omega,i)}\bigg)\le 2^{-n}.$$
    \end{enumerate}
    Let us proceed with the inductive construction. For n=1 we consider $m_1=1$. We use Lemma \ref{sep2} with $n=k=1$, $\widetilde O_1=C$, $\ep=1/2$ and $\lambda\in S^+_{\ell_1(\{1\})}=\{1\}$. Thus, there is a nonempty $\restr{\tau}{C}$-open set $O_1$ satisfying \eqref{it2}. Now, provided that for some $n\in\N$ we have $\Omega_n=\{1,\dots,m_1\}\times\cdots\times\{1,\dots,m_n\}$ and $O_\omega$ for every $\omega\in\Omega_n$ such that $\{O_\omega\}_{\omega\in\Omega_n}$ satisfy \eqref{it1}-\eqref{it4}, let us define $m_{n+1}$ and $O_{(\omega,i)}$ for every $(\omega,i)\in\Omega_n\times\{1,\dots,m_{n+1}\}=\Omega_{n+1}$ satisfying properties \eqref{it1}, \eqref{it2} and \eqref{it4}. First, by Lemma \ref{small} there is $m_{n+1}\in\N$ such that for every $\omega\in \Omega_n$ there are $m_{n+1}$ nonempty $\restr{\tau}{C}$-open sets $(\widetilde O_{(\omega,i)})_{i=1}^{m_{n+1}}$ such that
    \begin{equation}\label{eq5}
        \text{diam}\bigg(\sum_{i=1}^{m_{n+1}}\frac{1}{m_{n+1}}\widetilde O_{(\omega,i)}\bigg)\le 2^{-n}\;\;\;\;\text{ and }\;\;\;\; \widetilde O_{(\omega,i)}\subset O_\omega\;\;\;\forall i=1,\dots,m_{n+1}.
    \end{equation}
    Now, by Lemma \ref{sep2}, for every $\omega\in\Omega_{n+1}$  there is a nonempty $\restr{\tau}{C}$-open set $O_\omega\subset \widetilde O_\omega$ satisfying property \eqref{it2}. Clearly, by \eqref{eq5}, $(O_\omega)_{\omega\in \Omega_{n+1}}$ also satisfies property \eqref{it4}. We may assume that $O_\omega$ is convex for every $\omega\in\Omega$ without loss of generality because $\tau$ is locally convex.
    
    \textbf{Step 2.} We are finally ready to construct the closed convex subset of $D$ without extreme points. Let us first pick for every $\omega\in\Omega$ an element $y_\omega\in O_\omega$. Now, if $\omega\in\Omega_n$ for some $n\in\N$ we define for every $k\ge n+3$ the element
    $$z^k_\omega=\sum_{\widetilde\omega\in\Omega^\omega_k}\frac{1}{m_{n+1}\dots m_{k}}y_{\widetilde\omega},$$
    Where $\Omega_k^\omega=\{\widetilde\omega\in\Omega_k\;:\;\widetilde\omega(i)=\omega(i)\;\;\forall i\le n\}$. Notice that $\#\Omega_k^\omega=m_{n+1}\cdots m_{k}$. We claim that $(z_\omega^k)_k$ is a Cauchy sequence for every $\omega\in\Omega$. From property \eqref{it1} we deduce that $O_{\widetilde\omega}\subset O_\omega$ for every $\widetilde\omega\in\Omega^\omega_k$. Now, for every $q\in\{n+2,\dots,k-1\}$ we have $\Omega_k^\omega=\bigcup\limits_{\widetilde{\widetilde{\omega}}\in\Omega_q^\omega}\Omega_k^{\widetilde{\widetilde{\omega}}}$ and therefore
    \begin{equation}\label{eq10}
        \begin{aligned}\sum_{\widetilde\omega\in\Omega^\omega_k}\frac{1}{m_{n+1}\dots m_{k}}O_{\widetilde\omega}=&\sum_{\widetilde{\widetilde\omega}\in\Omega^{\omega}_q}\sum_{{\widetilde\omega}\in\Omega^{\widetilde{\widetilde\omega}}_k}\frac{1}{m_{n+1}\dots m_q m_{q+1}\dots m_{k}}O_{{\widetilde\omega}}\\=&\sum_{\widetilde{\widetilde\omega}\in\Omega^{\omega}_q}\frac{1}{m_{n+1}\dots m_{q}}\sum_{{\widetilde\omega}\in\Omega^{\widetilde{\widetilde\omega}}_k}\frac{1}{m_{q+1}\dots m_{k}}O_{{\widetilde\omega}}.
        \end{aligned}
    \end{equation}
    Hence, it follows that

    \begin{equation}\label{exteq1}
    \begin{aligned}
        \sum_{\widetilde\omega\in\Omega^\omega_k}\frac{1}{m_{n+1}\dots m_{k}}O_{\widetilde\omega}\subset& \sum_{\widetilde\omega\in\Omega^\omega_q}\frac{1}{m_{n+1}\dots m_{q}}O_{\widetilde\omega}\\=&\sum_{\widetilde\omega\in\Omega^{\omega}_{q-1}}\frac{1}{m_{n+1}\dots m_{q-1}}\sum_{i=1}^{m_q}\frac{1}{m_q}O_{({\widetilde\omega},i)}.
    \end{aligned}
    \end{equation}
    Equation \eqref{eq10} yields that if $n\in\N$ and $\omega\in\Omega_n$,
    \begin{equation}\label{exteq2}z_\omega^k\in\sum_{\widetilde\omega\in\Omega^\omega_q}\frac{1}{m_{n+1}\dots m_{q}}O_{\widetilde\omega}\;\;\;\;\;\;\;\;\forall q,k\in\N\;,\;n+2\le q\le k-1.\end{equation}
    On the other hand, if $q\ge n+2$,
    \begin{equation}\label{exteq3}\begin{aligned}\text{diam}&\Big(\sum_{\widetilde\omega\in\Omega^\omega_q}\frac{1}{m_{n+1}\dots m_{q}}O_{\widetilde\omega}\Big)\\&\stackrel{\eqref{exteq1}}{\le}\sum_{\widetilde\omega\in\Omega^{\omega}_{q-1}}\frac{1}{m_{n+1}\dots m_{q-1}}\text{diam}\Big(\sum_{i=1}^{m_q}\frac{1}{m_q}O_{({\widetilde\omega},i)}\Big)\stackrel{\eqref{it4}}{\le} 2^{-q+1}.\end{aligned}\end{equation}
    
    Therefore, for every $k,k'\ge q+1$,
    $$\begin{aligned}\|z_\omega^{k}-z_\omega^{k'}\|&\le \text{diam}\Big(\sum_{\widetilde\omega\in\Omega^\omega_q}\frac{1}{m_{n+1}\dots m_{q}}O_{\widetilde\omega}\Big)\stackrel{\eqref{exteq3}}{\le} 2^{-q+1}.\end{aligned}$$
    This proves that $(z^k_\omega)_k$ is a Cauchy sequence for every $\omega\in\Omega$. We are then allowed to consider its limit $z_\omega\in D$. Our sough set finally is
    $$E=\overline{\text{co}}\big((z_\omega)_{\omega\in\Omega}\big).$$
    \textbf{Step 3.} We just need to show that $E$ does not have any extreme point.
    
    \textbf{Claim 1.} We claim that for every $a\in E$ there is a sequence $(a_k)_k$ where $a_k\in \text{co}\big((z_\omega)_{\omega\in\Omega_k}\big)$ for every $k\in\N$ such that $a_k\xrightarrow{\|\cdot\|}a$. 
    
    It is enough to show that if $\omega\in \Omega_n$ for some $n\in\N$ then for every $q>n$ it holds that \begin{equation}\label{exteq4}z_\omega=\sum\limits_{\widetilde{\widetilde\omega}\in\Omega_k^\omega}\frac{1}{m_{n+1}\cdots m_q}z_{\widetilde{\widetilde\omega}}\in \text{co}\big((z_{\widetilde{\widetilde\omega}})_{\widetilde{\widetilde\omega}\in\Omega_q}\big).\end{equation}    
    Indeed, if \eqref{exteq4} holds, then for every $x\in\text{co}\big((z_\omega)_{\omega\in\Omega}\big)$ there is $N\in\N$ such that for every $q\ge N$, $x\in\text{co}\big((z_{\widetilde{\omega}})_{\widetilde{\omega}\in\Omega_q}\big)$ and the claim is immediately deduced. Let us show the proof of \eqref{exteq4}. Let $\omega\in\Omega_n$ and $q>n$ be fixed. From \eqref{exteq2} we know that
    $$z^k_{\widetilde{\widetilde\omega}}\in\sum_{\widetilde\omega\in\Omega^{\widetilde{\widetilde\omega}}_{k-1}}\frac{1}{m_{q+1}\dots m_{k-1}}O_{\widetilde\omega}\;\;\;\;\;\forall\widetilde{\widetilde{\omega}}\in\Omega_q,\;\;\forall k\ge q+3.$$
    Therefore, for every $k\ge q+3$ it follows that
    \begin{equation}\label{exteq5}
    \begin{aligned} \sum_{\widetilde{\widetilde\omega}\in\Omega^\omega_q}\frac{1}{m_{n+1}\dots m_{q}}z^k_{\widetilde{\widetilde\omega}}\in \sum_{\widetilde{\widetilde\omega}\in\Omega^\omega_q}&\frac{1}{m_{n+1}\dots m_{q}}\sum_{\widetilde\omega\in\Omega^{\widetilde{\widetilde\omega}}_{k-1}}\frac{1}{m_{q+1}\dots m_{k-1}}O_{\widetilde\omega}\\\stackrel{\eqref{eq10}}{=}&\sum_{\widetilde\omega\in\Omega^\omega_{k-1}}\frac{1}{m_{n+1}\dots m_{k-1}}O_{\widetilde\omega}=:S_k.
    \end{aligned}
    \end{equation}
    Clearly, from \eqref{exteq2} we know that $z_\omega^k\in S_k$ for every $k\ge q+3$. From \eqref{exteq1} we get that $S_{k+1}\subset S_k$ for every $k\ge q+3$ and from \eqref{exteq2} we know that $\text{diam}S_k\to0$. Hence, by Lemma \ref{cauchy}
    $$z_\omega=\lim_kz_\omega^k=\lim_k \sum_{\widetilde{\widetilde\omega}\in\Omega^\omega_q}\frac{1}{m_{n+1}\dots m_{q}}z^k_{\widetilde{\widetilde\omega}}=\sum_{\widetilde{\widetilde\omega}\in\Omega^\omega_q}\frac{1}{m_{n+1}\dots m_{q}}z_{\widetilde{\widetilde\omega}},$$
    which finally proves the claim.

    Let us take $a\in E$ and a sequence $(a_k)$ converging to $a$ as in the claim. Then, for every $k\in \N$ there is $\lambda_k\in S^+_{\ell_1(\Omega_k)}$ such that $a_k=\sum\limits_{\omega\in\Omega_k}\lambda_k(\omega)z_{\omega}$ for every $k\in\N$. Let us prove that $a$ is not an extreme point in $E$. First, we extend $\lambda_k$ to $\bigcup\limits_{n\le k}\Omega_n$ as follows. If $\omega\in \Omega_n$ with $n\le k$ we define
    $$\lambda_k(\omega)=\sum\limits_{\widetilde\omega\in \Omega^{\omega}_k}\lambda_k(\widetilde\omega).$$
    We may assume (taking subsequence if necessary) that for every $n\in\N$, the sequence $(\restr{\lambda_k}{\Omega_n})_{k}$ is pointwise convergent. Therefore, we define $\lambda:\Omega\to[0,1]$ as
    $$\lambda(\omega)=\lim_k\lambda_k(\omega)\;\;\;\;\;\;\;\;\;\;(\omega\in \Omega).$$
    Clearly, $\restr{\lambda}{\Omega_n}\in S^+_{\ell_1(\Omega_n)}$ for every $n\in\N$. Moreover, if $\omega\in\Omega_n$ then
    \begin{equation}\label{eq14}
        \sum_{\widetilde\omega\in\Omega_m^\omega}\lambda(\widetilde{\omega})=\lim_k\sum_{\widetilde\omega\in\Omega_m^\omega}\lambda_k(\widetilde\omega)=\lim_k\lambda_k(\omega)=\lambda(\omega)\;\;\;\;\forall m\ge n.
    \end{equation}

    Let us now define for every $m,n\in\N$ with $m\ge n$ and every $\omega\in\Omega_n$ the element
    $$a^{m}(\omega)=\begin{cases}\sum\limits_{\widetilde\omega\in\Omega_m^\omega}\frac{\lambda(\widetilde\omega)}{\lambda(\omega)}z_{\widetilde\omega}\;\;&\text{ if }\lambda(\omega)>0,\\z_\omega&\text{ otherwise.}\end{cases}$$
    
    \textbf{Claim 2.} For every $\omega\in\Omega$ the sequence $(a^m(\omega))_m$ is convergent, moreover, if we take $a(\omega)=\lim\limits_ma^m(\omega)$ then $a=\sum\limits_{\omega\in\Omega_n}\lambda(\omega)a(\omega)$ for every $n\in\N$.
    
    If we fix $n\in\N$,  we have that $a_k\in\sum\limits_{\omega\in\Omega_n}\lambda_k(\omega)O_\omega$ for every $k\in\N$ so that $a\in\overline{\sum\limits_{\omega\in\Omega_n}\lambda(\omega)O_\omega}$. By property \eqref{it2} we deduce that
    \begin{equation}\label{eq12}\begin{aligned}\text{diam}\Big(\sum_{\omega\in\Omega_n}\lambda(\omega)O_\omega\Big)\le& 2d\Big(\{x_1,\dots,x_n\},\sum_{\omega\in\Omega_n}\lambda(\omega)O_\omega\Big)+2^{-n}\\\le& 2d(\{x_1,\dots,x_n\},a)+2^{-n}\xrightarrow{n\to\infty}0.\end{aligned}\end{equation}
    Also, if $m>n$ and $\omega\in\Omega_n$ with $\lambda(\omega)>0$ then
    $$\sum_{\widetilde\omega\in\Omega_m}\lambda(\widetilde\omega)O_{\widetilde\omega}=\lambda(\omega)\sum_{\widetilde\omega\in\Omega_m^\omega}\frac{\lambda(\widetilde\omega)}{\lambda(\omega)}O_{\widetilde\omega}+\sum_{\widetilde{\widetilde\omega}\in \Omega_n\setminus\{\omega\}}\lambda(\widetilde{\widetilde\omega})\sum_{\widetilde\omega\in\Omega_m^{\widetilde{\widetilde\omega}}}\frac{\lambda(\widetilde\omega)}{\lambda(\widetilde{\widetilde\omega})}O_{\widetilde\omega},$$
    so that,
    $$\text{diam}\Big(\sum_{\widetilde\omega\in\Omega_m}\lambda(\widetilde\omega)O_{\widetilde\omega}\Big)\ge\lambda(\omega)\text{diam}\Big(\sum_{\widetilde\omega\in\Omega_m^\omega}\frac{\lambda(\widetilde\omega)}{\lambda(\omega)}O_{\widetilde\omega}\Big).$$
    Therefore, by \eqref{eq12} we have
    \begin{equation}\label{eq13}
        \lim_m\text{diam}\Big(\sum_{\widetilde\omega\in\Omega_m^\omega}\frac{\lambda(\widetilde\omega)}{\lambda(\omega)}O_{\widetilde\omega}\Big)=0\;\;\;\forall\omega\in\Omega_n\;\text{ with }\lambda(\omega)>0.
    \end{equation}
    Let us prove that the sequence of subsets  $\Big(\sum\limits_{\widetilde\omega\in\Omega_m^\omega}\frac{\lambda(\widetilde\omega)}{\lambda(\omega)}O_{\widetilde\omega}\Big)_m$ is decreasing for every $\omega\in\Omega$ in order to use Lemma \ref{cauchy}.
    Clearly, by \eqref{eq14}, property \eqref{it1} and the fact that $O_{\widetilde\omega}$ is convex for every $\widetilde\omega\in\Omega$, we have that
    $$\sum_{\widetilde{\widetilde\omega}\in\Omega_{m+1}^{\widetilde\omega}}\frac{\lambda(\widetilde{\widetilde\omega})}{\lambda(\widetilde\omega)}O_{\widetilde{\widetilde\omega}}\subset O_{\widetilde\omega}\;\;\;\;\;\;\;\;\;\forall \widetilde\omega\in\Omega_m,\;\;\;\forall m\in\N.$$
   Then, if $\omega\in\Omega_n$ for some $n\in\N$ and $m\ge n$,
   $$\sum\limits_{\widetilde\omega\in\Omega_{m+1}^\omega}\frac{\lambda(\widetilde\omega)}{\lambda(\omega)}O_{\widetilde\omega}=\sum_{\widetilde\omega\in\Omega_m^\omega}\sum_{\widetilde{\widetilde\omega}\in\Omega_{m+1}^{\widetilde\omega}}\frac{\lambda(\widetilde{\widetilde\omega})}{\lambda(\omega)}O_{\widetilde{\widetilde\omega}}=\sum_{\widetilde\omega\in\Omega_m^\omega}\frac{\lambda({\widetilde\omega})}{\lambda(\omega)}\sum_{\widetilde{\widetilde\omega}\in\Omega_{m+1}^{\widetilde\omega}}\frac{\lambda(\widetilde{\widetilde\omega})}{\lambda(\widetilde\omega)}O_{\widetilde{\widetilde\omega}}\subset \sum\limits_{\widetilde\omega\in\Omega_{m}^\omega}\frac{\lambda(\widetilde\omega)}{\lambda(\omega)}O_{\widetilde\omega},$$
   which means that $\Big(\sum\limits_{\widetilde\omega\in\Omega_m^\omega}\frac{\lambda(\widetilde\omega)}{\lambda(\omega)}O_{\widetilde\omega}\Big)_m$ is decreasing for every $\omega\in \Omega$, as we wanted. In particular, taking $\omega=1\in\Omega_1$, this shows that the sequence of sets $\Big(\sum\limits_{\omega\in\Omega_m}\lambda(\omega)O_\omega\Big)_m$ is a decreasing sequence.

    By Lemma \ref{cauchy} the sequence $(a^m(\omega))_m$ is convergent in $E$ since $a^m(\omega)\in\sum\limits_{\widetilde\omega\in\Omega_m^\omega}\frac{\lambda(\widetilde\omega)}{\lambda(\omega)}O_{\widetilde\omega}$. Let us denote $a(\omega)$ its limit. We just need to prove that  $a=\sum\limits_{\omega\in\Omega_n}\lambda(\omega)a(\omega)$ for every $n\in\N$. If we fix $n\in\N$ and denote $a^m=\sum\limits_{\omega\in\Omega_n}\lambda(\omega)a^m(\omega)$ for every $m\ge n$, it is enough to show that $a^m\to a$. By \eqref{eq13}, we know that $\lim\limits_m\text{diam}\Big(\sum\limits_{\omega\in\Omega_m}\lambda(\omega)O_\omega\Big)=0$. For every $m\ge n$ we have that $a\in \overline{\sum\limits_{\omega\in\Omega_m}\lambda(\omega)O_\omega}$ and 
    $$a^m=\sum_{\substack{\omega\in\Omega_n\\\lambda(\omega)>0}}\lambda(\omega)a^m(\omega)=\sum_{\substack{\omega\in\Omega_n\\\lambda(\omega)>0}}\lambda(\omega)\sum_{\widetilde\omega\in\Omega^\omega_m}\frac{\lambda(\widetilde\omega)}{\lambda(\omega)}z_{\widetilde\omega}=\sum_{\widetilde\omega\in\Omega_m}\lambda(\widetilde\omega)z_{\widetilde\omega}\in\sum_{\omega\in\Omega_m}\lambda(\omega)O_\omega.$$
    Therefore, by Lemma \ref{cauchy}, $a^m\to a$ and Claim 2 is proven.

    Finally, we wanted to show that $E$ has no extreme point, that is, we want to show that our previously picked element $a\in E$ is not an extreme point in $E$. Notice that by Claim 2, it is enough to show that there is $\omega\in\Omega$ with $\lambda(\omega)>0$ such that $a(\omega)\neq a$.

    Let us argue by contradiction. If we deny the above statement then for every $\omega\in\Omega$ with $\lambda(\omega)>0$ we have $a(\omega)=a$. Therefore, $a\in O_\omega$ for every $\omega\in\Omega$ with $\lambda(\omega)>0$. From \eqref{eq14}, we know that there must be a sequence $(\omega_n)\subset \Omega$ such that for every $n\in\N$, $\omega_{n+1}\in\Omega_{n+1}^{\omega_n}$ and $\lambda(\omega_n)>0$. Hence, $a\in O_{\omega_n}$ for every $n\in\N$. By property \eqref{it2} we obtain that $d(a,X)\ge\delta/2$ which is clearly false.
    
\end{proof}

We want to remark that while part of the techniques used in the proof of the above theorem appear in \cite{S2}, we don't use the essential approach in \cite{S2} about the positive face of the unit sphere of $L_1$ via the construction of an operator from $L_1$ to the space $X$, in order to get the subset without extreme points.

Observe that the hypothesis on $D$ (non $\tau$-CPCP) in the above theorem is used first to get a subset $C$ so that every nonempty weakly open subset of $C$ has diameter at least $\delta$, for some positive $\delta$. From this moment, the $\tau$-SR is used to find convex combinations of relatively $\tau$-open subsets with diameter arbitrarily small inside every relative $\tau$-open subset of $C$. Then, starting with $D$ satisfying that every nonempty $\tau$-open subset of $D$ has diameter at least $\delta>0$ (say $D$ has the $\tau$-Diameter $\delta$ Property, $\tau$-D$\delta$P for short), we have with the same proof that $D$ fails KMP whenever $D$ is $\tau$-SR for open subsets. 

\begin{corollary}\label{maincol} Let $X$ be a separable Banach space and $D$ a closed, bounded and convex subset of $X$ with the $\tau$-D$\delta$P for some positive $\delta$,  where $\tau$ is a locally convex topology on $D$ containing the weak topology on $D$. If $D$ is $\tau$-SR for open subsets  then $D$ fails KMP.\end{corollary}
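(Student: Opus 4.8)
The plan is to re-run the proof of Theorem~\ref{Stheorem} almost verbatim, with the single choice $C=D$, and to check that the two places where the hypotheses of that theorem were invoked can now be fed, respectively, by the $\tau$-D$\delta$P and by $\tau$-SR for open subsets. Recall that in Theorem~\ref{Stheorem} the failure of $\tau$-CPCP was used only once, at the very beginning, to produce a bounded convex set $C\subset D$ and a constant $\delta>0$ such that every nonempty $\restr{\tau}{C}$-open set has diameter at least $\delta$; and $\tau$-strong regularity entered only through Lemma~\ref{small}, while Lemmas~\ref{sep} and~\ref{sep2} (which drive the whole separation machinery) use nothing beyond the fact that $\tau$ contains the weak topology, since they only ever refine open sets by intersecting with weakly open half-spaces of the form $\{x^*>c\}$.

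First I would dispose of the initial step. Taking $C=D$, the $\tau$-D$\delta$P is exactly the statement that every nonempty $\restr{\tau}{D}$-open set has diameter at least $\delta$, so no extraction from a CPCP failure is needed. With this choice, the final contradiction in Step~3 goes through unchanged: along the chain $(\omega_n)$ with $\lambda(\omega_n)>0$ one has $a\in O_{\omega_n}$ for every $n$, and specializing property~\eqref{it2} to the point mass $\lambda=e_{\omega_n}\in S^+_{\ell_1(\Omega_n)}$ yields
$$\text{diam}(O_{\omega_n})\le 2\,d(\{x_1,\dots,x_n\},O_{\omega_n})+2^{-n}.$$
Since $(x_n)$ is dense and $a\in O_{\omega_n}$, the right-hand side tends to $0$, contradicting $\text{diam}(O_{\omega_n})\ge\delta$, which is now granted by the $\tau$-D$\delta$P.

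The crux -- and the only real point to verify -- is that $\tau$-SR for open subsets suffices in place of full $\tau$-SR, and this is exactly where the choice $C=D$ pays off. Every set $O_\omega$ produced in Step~1 is $\restr{\tau}{C}$-open, hence now $\restr{\tau}{D}$-open, i.e.\ genuinely $\tau$-open in $D$, and may be taken convex because $\tau$ is locally convex. Thus whenever the induction calls Lemma~\ref{small} on a parent set $O_\omega$, I would instead invoke the open-subset version of that lemma: passing to a convex $\widetilde O\in\restr{\tau}{D}$ with $\widetilde O\subset O_\omega$, the set $\widetilde O$ is a convex $\tau$-open subset of $D$, so $\tau$-SR for open subsets applies directly to $\widetilde O$ and furnishes nonempty $O_1,\dots,O_m\in\restr{\tau}{\widetilde O}$ with $\text{diam}\big(\sum_i\tfrac1m O_i\big)<\ep$, exactly as needed for property~\eqref{it4}. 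This is precisely the step that fails for the general $C$ of Theorem~\ref{Stheorem}, where $\widetilde O$ is only $\restr{\tau}{C}$-open and need not be $\tau$-open in $D$, which is what forces the stronger $\tau$-SR hypothesis there. Lemma~\ref{sep2} is then applied as before to secure~\eqref{it2}, using no regularity at all. With Step~1 so obtained, Steps~2 and~3 are identical to the theorem, and the resulting set $E=\overline{\text{co}}\big((z_\omega)_{\omega\in\Omega}\big)$ is a closed convex subset of $D$ with no extreme point. I expect the only subtlety worth spelling out to be this matching of the open-subset hypothesis with the sets $O_\omega$, everything else being a transcription of the previous proof.
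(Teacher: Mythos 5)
Your proposal is correct and is essentially the paper's own argument: the paper obtains Corollary~\ref{maincol} from the remark preceding it, observing exactly as you do that in the proof of Theorem~\ref{Stheorem} the failure of $\tau$-CPCP is used only to produce a convex set $C$ on which every nonempty relatively $\tau$-open set has diameter at least $\delta$ (now supplied by the $\tau$-D$\delta$P with the choice $C=D$), and that $\tau$-strong regularity enters only through Lemma~\ref{small}, where, since with $C=D$ all the sets $O_\omega$ are genuinely $\tau$-open in $D$, the hypothesis of $\tau$-SR for open subsets suffices. Your additional details --- the point-mass specialization of property~\eqref{it2} for the final contradiction and the remark that Lemmas~\ref{sep} and~\ref{sep2} only refine by weakly open half-spaces --- are accurate fillings-in of what the paper leaves implicit.
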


It is worth mentioning that SR for open subsets is strictly weaker than SR. Indeed, every Banach space with a LUR norm has a unit ball which is SR for open subsets while it may not be SR (for instance, a LUR renorming of $c_0$). This underlines the fact that SR for open subsets is in general not stable under isomorphisms whereas SR is. Hence, Corollary \ref{maincol} is a more precise and isometric statement of Theorem \ref{Stheorem}. On the other hand, it is known that if a closed, bounded and convex subset $C$ of a Banach space $X$ fails CPCP, then there is an equivalent norm on $X$ so that the new unit ball has the D$\delta$P for some $\delta>0$ \cite{BLR3}. 

In order to get a nonseparable analogous result of the above theorem, recall that a topological space X is said to have countable tightness if for every $A\subset X$ and for every $x\in \overline{A}$ there is a countable subset $B\subset A$ such that $x\in \overline{B}$. The key to get the nonseparable result is to have the separable determination of $\tau$-CPCP, which is proved in the next lemma.

\begin{lemma}  Let $X$ be a Banach space and $D$ a   closed, bounded and convex subset of $X$ without $\tau$-CPCP, where $\tau$ is a locally convex topology on $D$ with countable tightness containing the weak topology on $D$. Then there is a separable, closed, bounded and convex subset of $D$ without $\tau$-CPCP.\end{lemma}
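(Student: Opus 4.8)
The plan is to reproduce, inside a separable closed convex subset, the exact mechanism by which $D$ fails $\tau$-CPCP, using countable tightness to keep every step of the construction countable. First I would unwind the hypothesis: since $D$ fails $\tau$-CPCP, fix a bounded convex $C\subseteq D$ and $\ep_0>0$ such that \emph{every} nonempty $\restr\tau C$-open set has diameter at least $\ep_0$. Equivalently, every point $x\in C$ has all of its relative $\tau$-neighborhoods of diameter $\ge\ep_0$, so one cannot separate $x$ from the far-away points of $C$; concretely, for every $\delta>0$,
\begin{equation*}
x\in\overline{\{c\in C:\ \|c-x\|\ge \tfrac{\ep_0}{2}-\delta\}}^{\,\tau},
\end{equation*}
since otherwise a small $\tau$-neighborhood of $x$ would sit inside a ball of radius $<\ep_0/2$ and have diameter $<\ep_0$. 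I would also record that norm-diameter is $\tau$-closure invariant: because $\tau$ contains the weak topology, $\diam(\overline M^{\,\tau})=\diam(M)$ for every bounded $M$, which lets me transfer diameters freely between a set and its $\tau$-closure.

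Next comes the essential use of countable tightness: for each $x$ and each $\delta$ it supplies a \emph{countable} set $B_{x,\delta}\subseteq C$ of points at distance $\ge\ep_0/2-\delta$ from $x$ with $x\in\overline{B_{x,\delta}}^{\,\tau}$. I would then run a closing-off producing an increasing sequence of countable sets $S_0\subseteq S_1\subseteq\cdots\subseteq C$: at stage $n$ I select a countable set $Q_n\subseteq\co(S_n)$ that is $\tau$-dense in $\co(S_n)$ and throw into $S_{n+1}$ all the witnesses $B_{q,1/j}$ with $q\in Q_n$ and $j\in\N$. Setting $S=\bigcup_n S_n$ and $D'=\overline{\co}^{\,\|\cdot\|}(S)$, the set $D'$ is separable, closed, bounded and convex, and $D'\subseteq D$. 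By design $Q:=\bigcup_n Q_n$ is $\tau$-dense in $D'$, and every $q\in Q$ satisfies $q\in\overline{\{c\in D':\ \|c-q\|\ge\ep_0/2-\delta\}}^{\,\tau}$ for all $\delta$.

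I would then verify that $D'$ fails $\tau$-CPCP, with $D'$ itself as the bad convex subset. Given a nonempty $\restr\tau{D'}$-open set $D'\cap U$, pick $x\in D'\cap U$; by $\tau$-density choose $q\in Q\cap U$. Since $U$ is also a $\tau$-neighborhood of $q$ and $q$ lies in the $\tau$-closure of the far-away points of $D'$, there is $c\in D'\cap U$ with $\|c-q\|\ge\ep_0/2-\delta$, so $\diam(D'\cap U)\ge\ep_0/2-\delta$. As $\delta$ is arbitrary, every nonempty relatively $\tau$-open subset of $D'$ has diameter at least $\ep_0/2$, whence $D'$ has no such set of arbitrarily small diameter and therefore fails $\tau$-CPCP.

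The main obstacle is the step demanding, at each stage, a \emph{countable} $\tau$-dense subset $Q_n$ of the convex hull $\co(S_n)$, and ultimately that $Q$ be $\tau$-dense in $D'$: since $\tau$ may be strictly finer than the norm topology, a norm-dense countable set need not be $\tau$-dense, and a priori $\co(S_n)$ could be $\tau$-nonseparable, so this is precisely where countable tightness — not mere norm-separability — must be exploited. Here the failure of $\tau$-CPCP itself is the key leverage: because every nonempty relatively $\tau$-open subset of $C$ has diameter $\ge\ep_0$, the topology $\restr\tau C$ admits no small-diameter open sets and is thus far coarser than the norm topology (indeed it cannot be the norm topology, which would provide tiny balls). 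Converting this coarseness, together with countable tightness, into the required $\tau$-separability of the successive convex hulls, and interleaving it correctly with the far-point closing-off so that the whole construction remains countable, is the crux of the argument.
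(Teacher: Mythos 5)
Your overall strategy is the paper's: extract a bad convex set $C$ and $\ep_0>0$, observe that each $x\in C$ lies in the $\tau$-closure of the points of $C$ at distance roughly $\ep_0/2$ from $x$, use countable tightness to replace these sets of far points by countable witness sets, and run a countable closing-off. But your proposal has a genuine gap, and you flag it yourself: the step demanding at each stage a countable $\tau$-dense subset $Q_n$ of $\co(S_n)$, and ultimately the $\tau$-density of $Q$ in $D'$. Countable tightness cannot deliver this: it controls the closure of a \emph{given} set by countable subsets, but says nothing about the density character of $\tau$, and $\restr{\tau}{C}$ may admit no countable dense subset whatsoever. Your attempted leverage --- ``every nonempty relatively $\tau$-open subset of $C$ has diameter $\ge\ep_0$, hence $\tau$ is far coarser than the norm topology'' --- is a non sequitur: the absence of small-diameter open sets does not make norm-dense sets $\tau$-dense. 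The paper's own Section~3 topology on $B_{c_0}$ illustrates this: every nonempty relatively $\tau$-open set there has diameter $2$, yet the basic open sets impose constraints on infinitely many coordinates (with intervals whose lengths may shrink), so they need not be norm-open, and a countable norm-dense set can fail to be $\tau$-dense. A second, smaller instance of the same problem occurs when you pass from $\tau$-density in $\co(S)$ to $\tau$-density in $D'=\overline{\co}(S)$: norm limits need not be $\tau$-limits since $\tau$ may be strictly finer than the weak topology; this particular step you could dodge by testing on the convex set $\co(S)$ itself, because failing $\tau$-CPCP only requires exhibiting \emph{one} bounded convex subset --- not a closed one --- with no small relatively $\tau$-open sets. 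The existence of $Q_n$, however, is irreparable within your scheme.

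The paper's proof shows how to avoid any density requirement: instead of restricting to a countable $Q_n$, it adjoins the witness set $D_a$ for \emph{every} point $a$ of the previously built set, setting $B_1=\{a_0\}\cup\co(D_{a_0})$ and $B_{n+1}=B_n\cup\co\big(\bigcup_{a\in B_n}D_a\big)$, and then verifies the diameter bound on $B=\bigcup_n B_n$ itself rather than on a closure: any $x\in U\cap B$ lies in some $B_n$, so its \emph{own} witness set $D_x\subseteq B_{n+1}\subseteq B$ meets the $\tau$-neighborhood $U$, producing $y\in U\cap B$ with $\|x-y\|\ge\delta/2$. The two ideas you are missing are thus: (i) every point of the test set should carry its own witnesses inside the set, so that no $\tau$-approximation step is ever needed; and (ii) the failure of $\tau$-CPCP may be witnessed on the bounded convex set $B$, which need not be closed, the separable closed convex set of the statement being obtained by closing up afterwards. (The trade-off is that the paper's stages $B_n$ are no longer countable sets but convex hulls, so its separability bookkeeping must be read with this replacement of ``countable'' by ``separable'' in mind; what matters structurally is that its verification is pointwise on $B$ and never invokes a $\tau$-dense countable set, which is exactly the object your argument cannot produce.)
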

\begin{proof} If $D$ fails $\tau$-CPCP there is a convex subset $A$ of $D$ and a $\delta > 0$ such that every relatively $\tau$-open
subset of $A$ has diameter at least $\delta$, so $a\in \overline{A \setminus B(a, \frac{\delta}{2})}^{\tau}$ for every $a\in A$. As $\tau$ has countable tightness, for every
$a \in A$ there is $D_a$ a countable subset of $A \setminus B(a, \frac{\delta}{2})$ such that $a \in\overline{D_a}^{\tau}$ .
For $a_0\in A$ define $B_1 = \{a_0\} \cup co(D_{ a_0} )$ and $B_{ n+1} = B_n \cup co(\cup_{
a\in B_n} D_a)$ for every $n$. Now put 
$B = \cup_{n} B_n$. Then $\overline{B}^{\tau}$ is a separable, closed, bounded and convex subset of D. Observe that $B$
is convex since $B_n \subset co(\cup_{a\in B_n} D_a)$ and $B_{n+1} = B_n \cup co(\cup_{a\in B_n} D_a)$ for every $n$.
It is enough to prove that every relatively $\tau$-open subset of $B$ has diameter, at least, $\frac{\delta}{2}$. Indeed, if $U$ is a relatively $\tau$-open subset
of $B$ then there is $ x \in U \cap B_n$ for some $n$ and $x\in\overline{D_x}^{\tau}$. Then there is $y \in D_x \cap B\cap U$. Since $\|x-y\|> \frac{\delta}{2}$, one has
$\text{diam}(U )\ge \frac{\delta}{2}$ and we are done.\end{proof}
 
 Finally, using the above lemma joint to theorem \ref{Stheorem} we get the following

\begin{corollary} Let $X$ be a Banach space and $D$ a $\tau$-strongly regular closed, bounded and convex subset of $X$ without $\tau$-CPCP, where $\tau$ is a locally convex topology on $D$ with countable tightness containing the weak topology on $D$. Then, there is a closed and convex subset of $D$ without extreme points, that is $D$ fails KMP.
\end{corollary}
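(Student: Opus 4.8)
The plan is to reduce the nonseparable statement to the separable Theorem~\ref{Stheorem} by passing to a suitable separable closed, bounded and convex piece of $D$ on which every hypothesis survives. First I would invoke the preceding lemma, the separable determination of $\tau$-CPCP: since $D$ fails $\tau$-CPCP and $\tau$ has countable tightness, there is a separable, closed, bounded and convex subset $D'\subseteq D$ which again fails $\tau$-CPCP for the induced topology $\restr{\tau}{D'}$. This is the only point at which countable tightness enters, exactly as advertised; note also that, $B$ being convex and $\tau$ refining the weak topology, the $\tau$-closure of $B$ lies inside its norm closure (Mazur), so $D'$ is genuinely norm-separable.

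Next I would manufacture a separable ambient Banach space. Set $Y=\overline{\spann}(D')$, the norm-closed linear span of $D'$; since $D'$ is norm-separable, so is $Y$, and $D'$ is a closed, bounded and convex subset of $Y$. The key compatibility check is that the weak topology of $Y$ restricted to $D'$ coincides with the relative weak topology inherited from $X$: by Hahn--Banach every $y^{*}\in Y^{*}$ is the restriction of some $x^{*}\in X^{*}$, while conversely each $x^{*}\in X^{*}$ restricts to an element of $Y^{*}$, so $\restr{w_X}{D'}=\restr{w_Y}{D'}$. Because $\tau$ contains the weak topology of $X$ on $D$, its restriction $\restr{\tau}{D'}$ therefore contains the weak topology of $Y$ on $D'$. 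Moreover $\restr{\tau}{D'}$ is locally convex, being the restriction of the locally convex topology $\tau$ to the convex set $D'$, exactly as observed in the proof of Lemma~\ref{small}.

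It then remains to transport strong regularity. I claim $D'$ is $\restr{\tau}{D'}$-strongly regular: any bounded convex subset $C$ of $D'$ is also a bounded convex subset of $D$, and by transitivity of the subspace topology the topology induced on $C$ from $\restr{\tau}{D'}$ is the very same as the one induced from $\tau$; since $D$ is $\tau$-strongly regular, $C$ admits convex combinations of relatively open sets of arbitrarily small diameter, which is precisely what $\restr{\tau}{D'}$-strong regularity of $D'$ demands. With all hypotheses in hand, namely $Y$ separable, $D'$ closed, bounded, convex and $\restr{\tau}{D'}$-strongly regular, $\restr{\tau}{D'}$ locally convex and containing the weak topology, and $D'$ lacking $\restr{\tau}{D'}$-CPCP, Theorem~\ref{Stheorem} produces a norm-closed convex subset $E\subseteq D'$ without extreme points. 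Finally, as $Y$ is norm-closed in $X$ the set $E$ is norm-closed in $X$, and since the absence of extreme points is intrinsic to the convex set $E$ and independent of the ambient space, $E$ is a closed convex subset of $D$ without extreme points. Hence $D$ fails KMP.

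I expect the main obstacle to be the bookkeeping that guarantees the descent of the hypotheses rather than any single hard estimate: specifically, confirming the coincidence $\restr{w_X}{D'}=\restr{w_Y}{D'}$ and, above all, verifying that $\tau$-strong regularity of $D$ is inherited by $D'$, which hinges on the facts that strong regularity quantifies over \emph{all} bounded convex subsets and that the subspace topology is transitive. Everything else is a direct appeal to the separable determination lemma and to Theorem~\ref{Stheorem}.
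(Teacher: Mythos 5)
Your proposal is correct and takes essentially the same approach as the paper, which obtains this corollary precisely by combining the separable-determination lemma with Theorem \ref{Stheorem}. The paper leaves implicit the routine verifications you spell out --- passing to the separable ambient space $Y=\overline{\spann}(D')$, the coincidence $\restr{w_X}{D'}=\restr{w_Y}{D'}$ via Hahn--Banach, and the inheritance by $D'$ of local convexity and $\tau$-strong regularity (the latter because strong regularity quantifies over all bounded convex subsets) --- so your write-up is simply a more detailed rendering of the intended argument.
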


It is worth mentioning that the weak topology on any Banach space has countable tightness regardless the space being separable or not \cite{B}.

\section{Non-CPCP and SR locally convex topology on the unit ball of $c_0$}

Since the problem about the equivalence between RNP and KMP, as we said in the introduction, is solved when CPCP and SR are not equivalent, the above theorem suggests exploring a new way to proceed: starting from a Banach space $X$ failing CPCP, is it possible to construct a locally convex topology $\tau$ on the unit ball of $X$, $B_X$,  containing the weak topology so that $B_X$ still fails $\tau$-CPCP but satisfies $\tau$-SR? Notice that a positive answer to this question would show that RNP and KMP are indeed equivalent. We don't know the answer to this question, but a first step is to answer the above question for the unit ball of $c_0$, $B_{c_0}$, since it fails CPCP. In fact, every nonempty weakly open subset of $B_{c_0}$ has diameter $2$, that is $B_{c_0}$ enjoys the D$\delta$P for $\delta=2$. The interesting fact is that $B_{c_0}$ also fails to be SR even for open sets so that the weak topology does not make the trick.

In this section we give a positive answer to the latter question for $B_{c_0}$ and for that purpose we state and prove the following lemma.

\begin{lemma}\label{parti1}
    Let $k\in\N$, $\mathcal{A}\subset \mathcal{P}(\N)$ countable family and $A_0\in \mathcal{A}$. There is a partition $\{A^1_0,\dots A^k_0\}$ of $A_0$ satisfying that for every $F\in \mathcal{A}^{<\omega}$ and $1\le i\le k$, if $A_0\cap\bigcap_{A\in F}A$ is infinite, so is $A_0^i\cap\bigcap_{A\in F}A$.
\end{lemma}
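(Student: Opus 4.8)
The plan is to reduce the statement to a single diagonal colouring of $A_0$. Since $\mathcal{A}$ is countable, so is $\mathcal{A}^{<\omega}$, and hence the family of \emph{traces}
$$\mathcal{B}=\Big\{A_0\cap\bigcap_{A\in F}A\;:\;F\in\mathcal{A}^{<\omega}\text{ and }A_0\cap\bigcap_{A\in F}A\text{ is infinite}\Big\}$$
is countable; I enumerate its members as $B_1,B_2,\dots$, each an infinite subset of $A_0$. If $A_0$ is finite this family is empty and any partition works, so I assume $A_0$ infinite; note that $A_0$ itself is a trace (take $F=\{A_0\}$), so $\mathcal{B}\neq\emptyset$. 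The target is a colouring $c\colon A_0\to\{1,\dots,k\}$ for which every trace meets every colour class infinitely; setting $A_0^i=c^{-1}(i)$ then yields a partition with the required property, because $A_0^i\subset A_0$ gives $A_0^i\cap\bigcap_{A\in F}A=A_0^i\cap B$ whenever $B=A_0\cap\bigcap_{A\in F}A$ is the corresponding trace.

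Next I would build $c$ by a priority/diagonal recursion. Fix a surjection $t\mapsto(m_t,i_t)$ from $\N$ onto $\N\times\{1,\dots,k\}$ attaining every value infinitely often. Maintaining a finite partial colouring, at stage $t$ I choose a point $x\in B_{m_t}$ not yet coloured — possible since $B_{m_t}$ is infinite and only finitely many points have been coloured in the previous stages — and set its colour to $i_t$. After all stages, I colour every remaining uncoloured point of $A_0$ with colour $1$. This produces a colouring of all of $A_0$ in which no point is ever recoloured, so $\{A_0^1,\dots,A_0^k\}$ is a partition of $A_0$.

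Finally I would verify the infinitude requirement: for fixed $m$ and $i$, the pair $(m,i)$ is handled at infinitely many stages $t$, and each such stage colours a \emph{fresh} point of $B_m$ with colour $i$; hence $B_m\cap A_0^i$ is infinite. Translating back, whenever $A_0\cap\bigcap_{A\in F}A$ is infinite it equals some $B_m$, and then $A_0^i\cap\bigcap_{A\in F}A=A_0^i\cap B_m$ is infinite for every $i$, which is exactly the asserted conclusion.

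The only delicate point is the bookkeeping of the recursion: one must list the pairs $(m,i)$ so that each recurs infinitely often and, at every stage, be sure that an uncoloured point of the relevant trace is still available. Both are immediate — the former from the countability of $\N\times\{1,\dots,k\}$, the latter from the fact that each $B_m$ is infinite while only finitely many points are coloured at any finite stage — so I do not expect a genuine obstacle. The real content is simply the observation that countably many infinite constraints can be satisfied simultaneously by a single diagonalisation.
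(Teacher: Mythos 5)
Your proof is correct and follows essentially the same diagonalisation as the paper's: enumerate the countably many infinite traces $A_0\cap\bigcap_{A\in F}A$ and run a bookkeeping recursion in which each pair (trace, colour) is served at infinitely many stages, each stage colouring a fresh point of the relevant trace — exactly the content of the paper's inductive properties (1) and (2). The only cosmetic difference is that the paper enforces $\bigcup_{i\le k}A_0^i=A_0$ through an explicit exhaustion condition woven into the induction (its property (3), consuming the initial segment $\{a_j\}_{j\le kp}$ by stage $p$), whereas you simply assign all leftover points to colour $1$ at the end, which is a harmless simplification.
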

\begin{proof}
    Since $\mathcal{A}$ is countable, so is $\mathcal{F}=\{F\in \mathcal{A}^{<\omega}\;:\;A_0\cap\bigcap_{A\in F}A\text{ is infinite}\}$. Hence, we may label its elements as $\mathcal{F}=\{F_n\}_{n\in\N}$ where $F_1=\{A_0\}$ and we also label $A_0=\{a_j\}_{j\in\N}$ (the case when $A_0$ is not infinite is trivial).
    We now construct inductively a family $\{A^i_n\}_{\substack{i\le k\\n\in\N}}$ of finite subsets of $A_0$ with the following properties for every $p,q\in\N$,
    \begin{enumerate}
        \item\label{ite1} For $i,j\le k$, $A^i_p\cap A^j_q=\emptyset$ if either $i\neq j$ or $p\neq q$.
        \item\label{ite2} $\#\big(A^i_p\cap\bigcap_{A\in F_m}A\big)\ge 1$ for every $m\le p$ and $i\le k$.
        \item\label{ite3}         $$A_0\setminus\Big(\bigcup_{\substack{m\le p\\i\le k}}A_m^i\Big)\subset \{a_j\}_{j>kp}.$$
    \end{enumerate}
    We take $A^i_1=\{a_i\}$ for $i=1,\dots,k$ which satisfy the above properties for $p=q=1$. Assume that $A^i_m$ has been defined for every $i\le k$ and $m\le n-1$ satisfying the above properties for $p,q\le n-1$. We aim to define $A^i_n$ for $i\le k$ such that $\{A^i_{m}\}_{\substack{i\le k\\m\le n}}$ satisfy properties \eqref{ite1} to \eqref{ite3} for $p,q\le n$. For that purpose, if $A_0\setminus\Big(\bigcup_{\substack{i\le k\\ m\le n-1}}A^i_m\Big)=\{a_{\sigma(j)}\}_{j\in\N}$ we take $A^i_{n,1}=\{a_{\sigma(i)}\}$ and inductively $A^i_{n,m}=A^i_{n,m-1}\cup\{a_{\tau(i)}\}$ where
    \begin{equation}\label{pick}\Big(A_0\cap\bigcap_{A\in F_m}A\Big)\setminus\Big(\bigcup_{\substack{i\le k\\ r\le n-1}}A^i_r\cup\bigcup_{i\le k}A^i_{n,m-1}\Big)=\{a_{\tau(j)}\}_{j\in\N}.\end{equation}
    We rename $A^i_{n,n}=A^i_n$ for $i=1,\dots,k$. Let us prove that we reached our goal, that is, $\{A^i_{m}\}_{\substack{i\le k\\m\le n}}$ satisfy properties \eqref{ite1} to \eqref{ite3} for $p,q\le n$.

    \textbf{Property \eqref{ite1}:} If $p,q\le n-1$ then by the induction hypothesis it is clear that \eqref{ite1} holds true. Otherwise, we may assume that $p=n$. If $q<p=n$ and $i,j\le k$ then by construction, $A^i_{n,1}\cap A^j_q=\emptyset$ since $a_{\sigma(i)}\notin A^j_q$. If we assume that $A^i_{n,m-1}\cap A^j_q=\emptyset$, we need to show that $A^i_{n,m}\cap A^j_q=\emptyset$. Indeed, $A^i_{n,m}\cap A^j_q=\big(A^i_{n,m-1}\cup\{a_{\tau(i)}\}\big)\cap A^j_q=\{a_{\tau(i)}\}\cap A^j_q=\emptyset$ since again by construction $a_{\tau(i)}\notin A^j_q$. This shows that $A^i_n\cap A^j_q=A^i_{n,n}\cap A^j_q=\emptyset$. Finally, if $q=p=n$ and $i\neq j\le k$ then $a_{\sigma(i)}\neq a_{\sigma(j)}$ and hence $A^i_{n,1}\cap A^j_{n,1}=\emptyset$. We assume that $A^i_{n,m-1}\cap A^j_{n,m-1}=\emptyset$ and show that $A^i_{n,m}\cap A^j_{n,m}=\emptyset$. Since $a_{\tau(i)}\neq a_{\tau(j)}$ and $a_{\tau(i)},a_{\tau(j)}\notin A^i_{n,m-1}\cup A^j_{n,m-1}$ it is clear that $A^i_{n,m}\cap A^j_{n,m}=\big(A^i_{n,m-1}\cup \{a_{\tau(i)}\}\big)\cap\big(A^j_{n,m-1}\cup\{a_{\tau(j)}\}\big)=\emptyset$. Hence, $A^i_{n}\cap A^j_n=A^i_{n,n}\cap A^j_{n,n}=\emptyset$.

    \textbf{Property \eqref{ite2}:} If $p<n$ then by the induction hypothesis $\#\big(A^i_p\cap\bigcap_{A\in F_m}A\big)\ge 1$ for every $m\le p$ and $i\le k$. Let then $p=n$ and $i\le k$. In this case $a_{\tau(i)}\in A^i_{n,m}\subset A^i_n$. By \eqref{pick}, it is clear that $a_{\tau(i)}\in \bigcap_{A\in F_m}A$ and therefore $a_{\tau(i)}\in A^i_n\cap\bigcap_{A\in F_m}A$.

    \textbf{Property \eqref{ite3}:} By the induction hypothesis we have that
    $$\{a_{\sigma(j)}\}_{j\in\N}=A_0\setminus\Big(\bigcup_{\substack{m\le n-1\\i\le k}}A^i_m\Big)\subset \{a_j\}_{j>k(n-1)}.$$
    This means that $\sigma(1)\ge k(n-1)+1$ and hence $\sigma(j)\ge \sigma(1)+j-1\ge k(n-1)+j$. Taking into account that last inequality, we have that
    $$\begin{aligned}A_0\setminus\Big(\bigcup_{\substack{m\le n\\i\le k}}A^i_m\Big)=&\bigg(A_0\setminus\Big(\bigcup_{\substack{m\le n-1\\i\le k}}A^i_m\Big)\bigg)\setminus\Big(\bigcup_{i\le k}A^i_n\Big)\\=&\{a_{\sigma(j)}\}_{j\ge 1}\setminus \Big(\bigcup_{i\le k}A^i_{n,n}\Big)\subset \{a_{\sigma(j)}\}_{j\ge 1}\setminus \Big(\bigcup_{i\le k}A^i_{n,1}\Big)\\=&\{a_{\sigma(j)}\}_{j>k}\subset \{a_{k(n-1)+j}\}_{j>k}=\{a_j\}_{j>kn}.\end{aligned}$$
    This finishes the induction.

    Finally, we define $A_0^i=\bigcup_{n\in\N}A^i_n$ for every $i\le k$. It only remains to prove that $A_0^1,\dots,A_0^k$ satisfy the statement of the lemma. From \eqref{ite1} it is clear that $A_0^i\cap A_0^j=\emptyset$ for $i\neq j$. It is also immediate from \eqref{ite3} that $\bigcup_{i=1}^kA_0^i=A_0$ so that $\{A_0^1,\dots,A_0^n\}$ forms a partition of $A_0$. Finally, if $F\in\mathcal{F}$ then combining properties \eqref{ite1} and \eqref{ite2} we have that
    $$\#\big(A_0^i\cap\bigcap_{A\in F}A\big)\stackrel{\eqref{ite1}}{=}\sum\limits_{n=1}^\infty\#\big(A_n^i\cap\bigcap_{A\in F}A\big)\stackrel{\eqref{ite2}}{=}+\infty.$$
\end{proof}

Now, we pass to show the existence of an appropriate topology on $B_{c_0}$. From now on we say that an interval $I\subset [-1,1]$ is a proper open subinterval whenever it is open as a subset of $[-1,1]$ and different from $[-1,1]$.

\begin{theorem}
    There exists a locally convex topology $\tau$ in $B_{c_0}$ containing the weak topology such that $B_{c_0}$ is $\tau$-strongly regular for open subsets, but $B_{c_0}$ has the $\tau$-Diameter $\delta$ Property for $\delta=2$.
\end{theorem}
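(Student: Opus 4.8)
The plan is to take for $\tau$ the topology on $B_{c_0}$ generated by the weak topology together with the coordinate bands
$$V_{A,I}=\{x\in B_{c_0}\;:\;x_n\in I\ \text{ for all }n\in A\},$$
where $A$ ranges over a fixed countable family $\mathcal{A}\subseteq\mathcal{P}(\N)$ and $I$ over the proper open subintervals of $[-1,1]$ with rational endpoints. Each $V_{A,I}$ is convex, being an interval constraint coordinatewise, and the weak topology is thrown in by hand; hence $\tau$ is automatically locally convex, contains the weak topology, and has a countable basis as soon as $\mathcal{A}$ is countable. These verifications are routine, and all diameters are measured in the sup-norm of $c_0$. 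The entire content is hidden in the choice of $\mathcal{A}$, which I would take to be the closure of one carefully chosen seed family under finite unions, finite intersections, and the partitioning operation furnished by Lemma~\ref{parti1}, keeping it countable throughout.

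For the $\tau$-Diameter $2$ Property I would show that every nonempty basic $\tau$-open set $N=W\cap\bigcap_{j=1}^m V_{A_j,I_j}$ leaves infinitely many coordinates essentially free. The weak part $W$ pins down only a finite set $S$ of coordinates, and I would arrange $\mathcal{A}$ so that no finite union $\bigcup_j A_j$ is cofinite (an ideal-type thinness condition); then infinitely many indices lie outside $S\cup\bigcup_jA_j$, and on any such $n_0$ one can plant opposite spikes, producing $x,y\in N$ with $x_{n_0}$ near $+1$ and $y_{n_0}$ near $-1$ while keeping both in $B_{c_0}$ (set the remaining large-index entries to $0$ and match $W$ on $S$). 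Thus $\|x-y\|_\infty$ is arbitrarily close to $2$. Insisting on \emph{proper} open subintervals matters because bands of type $(-1,1)$ still admit values arbitrarily close to both endpoints, so a coordinate lying in some $A_j$ need not obstruct the spike unless the matching $I_j$ is genuinely restrictive.

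For $\tau$-strong regularity for open subsets I would start from a nonempty convex $\tau$-open set $O$, fix a basic neighbourhood $N\subseteq O$, and isolate a set $A_0\in\mathcal{A}$ that captures the coordinates on which points of $N$ can still carry large values. Applying Lemma~\ref{parti1} with this $A_0$ and a large $k$, I obtain a partition $A_0=\bigsqcup_{i=1}^k A_0^i$ into pieces that are again in $\mathcal{A}$ and that still meet every active finite intersection in an infinite set. I would then define relatively $\tau$-open subsets $O_i\subseteq O$ that confine the spikes of their points to the $i$-th piece by intersecting with $\bigcap_{j\neq i}V_{A_0^j,(-\epsilon,\epsilon)}$; the infinite-intersection clause of Lemma~\ref{parti1} is precisely what keeps each $O_i$ nonempty and compatible with the constraints of $N$. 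In the convex combination each coordinate of $A_0$ is then forced small by all but at most one index, and a coordinatewise estimate yields a bound of the expected shape
$$\text{diam}\Big(\tfrac1k\textstyle\sum_{i}O_i\Big)\le \tfrac2k+2\epsilon,$$
which is arbitrarily small for $k$ large and $\epsilon$ small.

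The hard part, and the reason the construction is delicate, is reconciling these two demands on the \emph{same} coordinates outside the chosen spike region $A_0$: strong regularity wants the averaging to cancel spikes everywhere, which pushes $\mathcal{A}$ toward covering all of $\N$, whereas the Diameter $2$ Property forbids any single $\tau$-open set from pinning down all large coordinates, which forces $\bigcup_jA_j$ to stay coinfinite and leaves infinitely many uncancelled coordinates. A naive use of complement-of-piece bands $V_{\N\setminus A_0^i,(-\epsilon,\epsilon)}$ would settle the averaging but collapse the diameter, since their $k$-fold intersection forces \emph{all} coordinates small. I expect the resolution to hinge exactly on the combinatorial strength of Lemma~\ref{parti1}: a partition preserving \emph{every} infinite intersection lets one redistribute the free spike region among finitely many open subsets (so the average kills the spikes) while guaranteeing that inside any single open set the spike region remains infinite (so diameter two survives). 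Calibrating $\mathcal{A}$ and the proper-interval bands so that both hold at once, and checking that the $O_i$ produced by the partition genuinely remain inside $O$ and nonempty, is where the main technical effort lies.
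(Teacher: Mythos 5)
Your overall architecture is the same as the paper's (a countable subbase of coordinate-band sets, free-coordinate bookkeeping via Lemma~\ref{parti1}, the Diameter~$2$ Property from infinitely many unconstrained coordinates in every nonempty basic open set, and strong regularity by partitioning the spike region and averaging), but your strong-regularity estimate has a genuine gap: the pieces $O_i=N\cap\bigcap_{j\neq i}V_{A_0^j,(-\e,\e)}$ control only the coordinates in $A_0$, while the coordinates that $N$ itself restricts are left with whatever interval $N$ prescribes, and that interval can be long. Concretely, take $N=\{x\in B_{c_0}\,:\,x_{k_0}\in(-1,1)\}$, a legitimate basic weakly open band: all $k$ pieces $O_i$ carry this same constraint on $k_0$, so choosing $u^i\in O_i$ with $u^i_{k_0}=1-\delta$ and $v^i\in O_i$ with $v^i_{k_0}=-1+\delta$ produces two points of $\frac1k\sum_i O_i$ at distance at least $2-2\delta$; your claimed bound $\frac2k+2\e$ fails. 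Nor can you rescue this by absorbing such coordinates into the spike region $A_0$: if $N$ pins $x_{k_0}$ to an interval disjoint from $(-\e,\e)$, say $(1/2,1)$, and $k_0\in A_0^j$, then every $O_i$ with $i\neq j$ becomes empty. The missing idea --- and the crux of the paper's construction --- is to refine, on \emph{each} constrained coordinate $k$, the inherited interval $I_{k,p}$ into $n$ pairwise disjoint nonempty open subintervals $I^1_{k,p},\dots,I^n_{k,p}$ of equal length (hence of length at most $2/n$), assigning the $i$-th subinterval to the $i$-th piece $U_p^i$. This forces each piece to have width at most $2/n$ on every non-spike coordinate, \emph{wherever} that interval sits inside $[-1,1]$, which is what yields the paper's estimate $\diam\big(\sum_{i=1}^n\frac1n U_p^i\big)\le\frac4n$; moreover the disjointness of the subintervals makes the $U_p^i$ pairwise disjoint, a fact the paper exploits to propagate its infinite-free-set invariant (property~(2) there) through the induction, since a nonempty finite intersection can then involve at most one piece from each partition.

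A secondary, repairable issue is your ``closure of a seed family under the partitioning operation'': Lemma~\ref{parti1} produces a partition adapted only to the family existing at the moment of application, so sets added later invalidate earlier partitions, and a one-shot closure is not well defined. The paper handles this with a double induction --- an outer induction on $n$ and an inner one enumerating the finite families $F_p$ with nonempty intersection, applying Lemma~\ref{parti1} each time to the up-to-date family $\{A_p\}\cup\{A_U\}_{U\in\beta_n^{p-1}}$ --- and this ordering is where the countability bookkeeping actually lives. Relatedly, nonemptiness of the pieces is not automatic in $c_0$: once infinitely many coordinates are constrained, one must be able to select values tending to $0$, which requires placing the refined subintervals near a $c_0$-selection of the ambient intersection; your $(-\e,\e)$ bands secure this only on the spike region.
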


\begin{proof}
    We will proceed by induction, constructing an increasing sequence $(\beta_n)_{n\in\N}$ of countable convex subbases of topologies in $B_{c_0}$ where $\beta_1$ is a subbase of the weak topology and such that for every $n\in\N$ the following properties are satisfied:
    \begin{enumerate}
        \item\label{base1} For every nonempty $U\in \beta_n$ there is $A_U\subset \N$ such that 
        $$U=\{(x_k)\in B_{c_0}\;:\;x_k\in I_k,\;\forall k\in \N\setminus A_U\},$$
        where $I_k$ is a nonempty proper open subinterval of $[-1,1]$.
        \item\label{base2}  For every $F\in \mathcal{F}(\beta_n):=\{G\in\beta_n^{<\omega}\;:\; \bigcap_{U\in G}U\neq\emptyset\}$, the set $\bigcap_{U\in F} A_{U}$ is infinite.
        \item\label{base3} If $n>1$ then for every $F\in \mathcal{F}(\beta_{n-1})$ there exist $U_1,\dots,U_n\in\beta_n\setminus\{\emptyset\}$ such that $U_i\subset \bigcap_{U\in F}U$ for $i=1,\dots,n$ and
        $$\text{diam}\bigg(\sum\limits_{i=1}^n\frac{1}{n}U_i\bigg)\le \frac{4}{n}.$$
    \end{enumerate}
    Clearly, the weak topology has a convex  countable subbase $\beta_1$ satisfying properties \eqref{base1} and \eqref{base2} (notice that property \eqref{base3} does not apply when $n=1$). Now, for the inductive step, assume that $\beta_{n-1}$ is a convex countable subbase containing $\beta_1$ and satisfying properties \eqref{base1} and \eqref{base2}. Since $\mathcal{F}(\beta_{n-1})$ is countable, we label its elements as $\mathcal{F}(\beta_{n-1})=\{F_m\}_m$. Now we rename $\bigcap_{U\in F_m}A_U=A_m$ so that by \eqref{base1},
    $$\bigcap_{U\in F_m}U=\{(x_k)\in B_{c_0}\;:\;x_k\in I_{k,m},\;\forall k\in \N\setminus A_m\},$$
    for some proper open subintervals $I_{k,m}$ of $[-1,1]$. We define $\beta_n$ by means of another inductive argument. Specifically, we construct now an increasing sequence of subbases $\{\beta_n^p\}_{p\in\N}$ such that $\beta_n^p$ enjoys the following properties for every $p\in\N$,
    \begin{enumerate}
        \setcounter{enumi}{3}
        \item\label{base4} $\beta_n^p=\{U_p^1,\dots,U_p^n\}\cup\beta_{n}^{p-1}$ with $U_p^1,\dots,U_p^n$ pairwise disjoint subsets of $\bigcap_{U\in F_p}U$ (taking $\beta_n^0=\beta_{n-1}$).
        \item\label{base5} $\beta_n^p$ satisfies properties \eqref{base1} and \eqref{base2}.
        \item\label{base6}
        $\text{diam}\bigg(\sum\limits_{i=1}^n\frac{1}{n}U_p^i\bigg)\le\frac{4}{n}.$
    \end{enumerate}
    Let us construct $\beta_n^1$ and show the above properties for $p=1$. 
    
    
    We use here Lemma \ref{parti1} with $k=n$, $\mathcal{A}=A_1\cup\{A_U\}_{U\in \beta_{n-1}}$ and $A_0=A_1$. Therefore, there exists $\{A_1^1,\dots,A_1^n\}$ a partition of $A_1$ such that for $i=1,\dots,n$, 
    \begin{equation}\label{star1}A^i_1\cap\bigcap_{U\in F_1\cup F}A_U\;\text{ is infinite }\;\;\;\;\forall F\in\beta_{n-1}^{<\omega}\;\;\text{with}\;\;F_1\cup F\in\mathcal{F}(\beta_{n-1}).\end{equation}
    
    We define $n$ pairwise disjoint subsets of $\bigcap_{U\in F_1}U$ as
    $$U_1^i=\{(x_k)\in B_{c_0}\;:\;x_k\in I_{k,1}^i\text{ if }k\in \N\setminus A_1,\;x_k\in(-1/n,1/n)\text{ if }k\in A_1\setminus A_1^i\},$$
    where $I^1_{k,1},\dots,I^n_{k,1}\subset I_{k,1}$ are pairwise disjoint nonempty open intervals of the same length.
    
    \textbf{Claim.} We claim that $\beta_n^1:=\{U_1^1,\dots,U_1^n\}\cup\beta_{n-1}$ is a subbase satisfying \eqref{base4}, \eqref{base5} and \eqref{base6}.

    Clearly, $\beta_n^1$ satisfies by definition property \eqref{base4}  so that we just have to show \eqref{base5} and \eqref{base6}. Let us first tackle property \eqref{base5}. It is immediate from the definitions and inductive assumptions that $\beta_n^1$ satisfies \eqref{base1} with $A_{U_1^i}=A_1^i$ for $i=1,\dots,n$. Let us focus on property \eqref{base2}. Consider then $F\in\mathcal{F}(\beta_n^1)$. If $F\in\beta_{n-1}^{<\omega}$ then from the induction hypothesis we know that \eqref{base2} is satisfied. Otherwise, there is $1\le i\le n$ such that $U_1^i\in F$. Since $\bigcap_{U\in F}U\neq\emptyset$ and $U_1^1,\dots,U_1^n$ are pairwise disjoint, necessarily $F\setminus\{U_1^i\}\in\beta_{n-1}^{<\omega}$. Now, since $U_1^i\subset \bigcap_{U\in F_1}U$ we obtain that
    $$\bigcap_{U\in F_1\cup F\setminus\{U_1^i\}}U=\bigg(\bigcap_{U\in F_1}U\bigg)\cap\bigg(\bigcap_{U\in F\setminus\{U^i_1\}}U\bigg)\subset U_1^i\cap\bigg(\bigcap_{U\in F\setminus\{U^i_1\}}U\bigg)=\bigcap_{U\in F}U\neq\emptyset.$$
    Hence, $F_1\cup F\setminus\{U_1^i\}\in \mathcal{F}(\beta_{n-1})$ and by \eqref{star1} we have that $\bigcap_{U\in F}A_U\supset A_1^i\cap\bigcap_{U\in F_1\cup F\setminus\{U_1^i\}}A_U$ is infinite . This finally proves that $\beta_n^1$ enjoys property \eqref{base2} and hence \eqref{base5}. Let us finish the proof of the claim by showing that $\beta_n^1$ enjoys property \eqref{base6}. Indeed, if we pick $(x_k^{i,1})_k,(y_k^{i,1})_k\in U_1^i$ for $i=1,\dots,n$, then $|x_k^{i,1}-y_k^{i,1}|\le\frac{2}{n}$ whenever $k\notin A_1^i$. Therefore, taking into account that $A_1^1,\dots,A^n_1$ are pairwise disjoint we get that
    $$\bigg\|\sum_{i=1}^n\frac{1}{n}(x_k^{i,1})_k-\sum_{i=1}^n\frac{1}{n}(y_k^{i,1})_k\bigg\|=\frac{1}{n}\sup_k\bigg|\sum\limits_{i=1}^n(x_k^{i,1}-y_k^{i,1})\bigg|\le\frac{1}{n}\sup_k\bigg(2+\sum\limits_{\substack{i=1\\k\notin A_1^i}}^n|x_k^{i,1}-y_k^{i,1}|\bigg)\le\frac{4}{n}.$$
    This finishes the proof of the claim.
    
    Let us now get into the inductive step. Assume $\beta_n^{p-1}$ has been defined for some $p>1$ enjoying properties \eqref{base4}, \eqref{base5} and \eqref{base6}.We use Lemma \ref{parti1} with $k=n$, the countable set $\mathcal{A}=\{A_p\}\cup\{A_U\}_{U\in\beta_{n}^{p-1}}$ and $A_0=A_p$ and obtain a partition $\{A_p^1,\dots,A_p^n\}$ of $A_p$ such that  for every $F\in (\beta_{n}^{p-1})^{<\omega}$ with $F_p\cup F\in\mathcal{F}(\beta_n^{p-1})$ we have that $A_p^i\cap\bigcap_{U\in F_p\cup F}A_U$ is infinite (this is due to the fact that $\beta_{n}^{p-1}$ satisfy \eqref{base2}). Now, we define for $i=1,\dots,n$ the set,
    $$U_p^i=\{(x_k)\in B_{c_0}\;:\;x_k\in I_{k,p}^i\text{ if }k\in \N\setminus A_p,\;x_k\in(-1/n,1/n)\text{ if }k\in A_p\setminus A_p^i\},$$
    where $I^1_{k,p},\dots, I^n_{k,p}\subset I_{k,p}$ are pairwise disjoint nonempty open intervals of the same length. Following the same reasoning as in the case $p=1$, we know that $\beta_n^p:=\{U_p^1,\dots,U_p^n\}\cup\beta_n^{p-1}$ satisfies properties \eqref{base4}, \eqref{base5} and \eqref{base6}. Finally, $\beta_n$ is defined as the union $\beta_n:=\bigcup_p\beta_n^p$. It is straightforward to see that $\beta_n$ satisfies properties \eqref{base1}, \eqref{base2} and \eqref{base3} and hence the inductive construction is complete.

    We are then ready to define our sought topology $\tau$ as the topology generated by the subbase $\beta=\bigcup_n\beta_n$. $\tau$ is clearly locally convex and contains the weak topology. Let us check that $B_{c_0}$ is $\tau$-strongly regular but does not enjoy the $\tau$-CPCP.

    In order to show that $B_{c_0}$ fails  $\tau$-CPCP we will prove that every nonempty $\tau$-open subset of $B_{c_0}$ has diameter $2$. For that,  observe that the family of finite intersections of elements in $\beta$ is a basis for $\tau$, so we will be done if we get that every set of the form $\bigcap_{U\in F}U$ is either empty or has diameter 2 for every $F\in\beta^{<\omega}$. There has to be some $n\in\N$ such that $F\in\beta_n^{<\omega}$. If we assume that $\bigcap_{U\in F}U$ is nonempty then  $F\in \mathcal{F}(\beta_n)$ and  the set $\bigcap_{U\in F} A_{U}$ is infinite from \eqref{base2}. Now, from \eqref{base1}, if $U\in F$ the elements in $U$ are sequences of $B_{c_0}$ without restrictions on coordinates from $A_U$, so there is a $k^th$ coordinate (in fact, there are infinitely many coordinates) where the elements of $U$ have no restrictions on its values, other than belonging to $B_{c_0}$, for every $U\in F$. Then, $\text{diam}(\bigcap_{U\in F}U)=2$.
                   
    In order to show that $B_{c_0}$ is $\tau$-SR for open subsets, let $O$ be a nonempty $\tau$-open set and $\ep>0$. Then, there is $F\in \beta^{<\omega}$ such that $\emptyset\neq\bigcap_{U\in F}U\subset O$. Therefore, there is $n\in\N$ such that $F\in\mathcal{F}(\beta_n)$. Consider $m>n$ such that $4/m<\ep$. Since $(\beta_n)_n$ is increasing, $F\in \mathcal{F}(\beta_{m-1})$ and thus by property \eqref{base3} there are $U_1,\dots,U_m\in \beta\setminus\{\emptyset\}$ such that $U_i\subset \bigcap_{U\in F}U$ for $i=1\dots,n$ and 
    $$\text{diam}\bigg(\sum\limits_{i=1}^m\frac{1}{m}U_i\bigg)\le \frac{4}{m}<\ep.$$
\end{proof}

We think that it would be interesting to know if the same result holds for the unit ball of $L_1$.

Observe that the topology defined in the above theorem can be defined on the whole space $c_0$. Moreover, it is even possible to define a topology $\tau$ on $\ell_{\infty}$, following the same above ideas, containing the weak-star topology on $\ell_{\infty}$ so that every relatively $\tau$-open subset of the unit ball has diameter $2$, while the unit ball contains convex combinations of relatively $\tau$-open subsets with arbitrarily small diameter. 

In \cite{BLR2} (see also \cite{A}) the authors proved that every Banach space containing $c_0$ can be equivalently renormed satisfying D2P so that the new unit ball contains convex combinations of relatively weakly open subsets of arbitrarily small diameter, showing the extreme difference between D2P and SD2P. The latter result stresses this extreme difference from a topological point of view, enlarging the family of open subsets with diameter 2 and obtaining the existence of convex combinations of relatively open subsets with arbitrarily small diameter, without renorming.



\end{document}